\newtheorem{thm}{Theorem}[section]
\newtheorem{lemma}[thm]{Lemma}
\newtheorem{prop}[thm]{Proposition}
\newtheorem{coro}[thm]{Corollary}
\theoremstyle{definition}
\newtheorem{defi}[thm]{Definition}
\newtheorem{rem}[thm]{Remark}
\def\Z{\mathds Z}
\def\R{\mathds R}
\def\phi{\varphi}
\def\<{{\langle}}
\def\>{{\rangle}}
\newcommand{\area}[1]{\mathrm{area}(#1)}
\newcommand{\lw}[1]{\mathrm{lw}(#1)}
\newcommand{\lwd}[2]{\mathrm{lwd}_{#1}(#2)}
\newcommand{\interior}[1]{\mathrm{int}(#1)}
\newcommand{\floor}[1]{\left\lfloor #1 \right\rfloor}
\newcommand{\ceil}[1]{\left\lceil #1 \right\rceil}
\newcommand{\plvsl}[1]{\mathrm{plvsl}(#1)}
\newcommand{\conv}[1]{\mathrm{convhull}(#1)}
\begin{document}

\title[Area bounds for planar convex bodies]{
Area bounds for planar convex bodies containing a fixed number of interior integral points}

\author[Martin Bohnert]{Martin Bohnert}
\address{Mathematisches Institut, Universit\"at T\"ubingen,
	Auf der Morgenstelle 10, 72076 T\"ubingen, Germany}
\email{martin.bohnert@uni-tuebingen.de}
	
\begin{abstract}
We prove area bounds for planar convex bodies in terms of their number of interior integral points and their lattice width data. As an application, we obtain sharp area bounds for rational polygons with a fixed number of interior integral points depending on their denominator. For lattice polygons, we also present an equation for the area based on Noether's formula. 
\end{abstract}

\maketitle

\thispagestyle{empty}

\section{Introduction}

For any $d\in \Z_{\geq 1}$, we obtain from Minkowski's theorem \cite[p. 76]{Min10} the sharp volume bound $2^d$ for centrally symmetric convex bodies in $\R^d$ with a single interior integral point. Blichfeld \cite[10.]{Bli21} and van der Corput \cite{vdC36} noted that we can generalize Minkowski's bound to $2^{d-1}(k+1)$ if we have an arbitrary odd number $k$ of interior integral points.

However, if the convex body is not centrally symmetric, then there are already examples for arbitrarily large volumes for $d=2$. For example, for any $(k, l)\in \Z_{\geq 1}^2$ we already have in \cite[2.6.]{LZ91} the rational triangle
\begin{align*}
T_{k,l}:=\text{convhull}&\left((0,0),\left(1+\frac{1}{l},0\right), \left(0,(l+1)(k+1)\right)\right)\subseteq \R^2
\end{align*}
with $k=|\interior{T_{k,l}}\cap \Z^2|$ interior integral points and
\begin{align*}
\area{T_{k,l}}=\frac{(l+1)^2}{2l}(k+1)>l.
\end{align*}

However, it is possible to replace the central symmetry with other assumptions and still obtain a bound on the area. Examples of such modifications of Minkowski's theorem can be found in \cite[3.]{Sco88}.

Our goal is to use the concept of \textit{lattice width data} introduced in the next section to prove area bounds depending on this data.

\begin{thm}\label{TheoremBigWidth}
Let $K\subseteq \R^2$ be a convex body with $k:=|\interior{K}\cap \Z^2|\in \Z_{\geq 1}$. If $K$ has symmetric lattice width data or a lattice width $\lw{K}>5$, then we have
\begin{align*}
\area{K}\leq 2(k+1)-\floor{\max\left(\frac{\lw{K}}{2}-3,0\right)}\cdot \floor{\frac{\lw{K}}{2}-2}.
\end{align*}
Moreover, if there is a function $f:\R_{>0}\to \R_{>0}$ with $\area{K}\leq 2(k+1)-f(\lw{K})$ for every convex body $K$, then
\begin{align*}
\limsup_{x\to \infty} \frac{f(x)}{x^2}\leq \frac{3}{8}.
\end{align*}
\end{thm}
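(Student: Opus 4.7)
My plan for the main inequality is a slicing argument in a width-minimizing direction. Choose a primitive $v\in\Z^2$ with direction-width equal to $L := \lw{K}$, and after a unimodular change of coordinates assume $v = (0,1)$, so that $K \subseteq \R \times [a, a+L]$ for some $a \in \R$. The horizontal slices $K \cap (\R \times \{j\})$ at integer heights $j \in (a, a+L)$ are chords of length $\ell_j$, each supporting at most $\floor{\ell_j}$ interior lattice points. Concavity of $\ell(y)$ (from convexity of $K$) relates $\sum_j \floor{\ell_j}$ to $\area{K} = \int \ell(y)\,dy$, and a one-dimensional Minkowski-type estimate then produces an inequality of the form $\area{K} \leq 2(k+1) - (\text{lower-order correction})$. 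The hypothesis of symmetric lattice width data forces the chord profile to be symmetric around its midpoint and so pairs up slices to cancel some of the floor losses; the alternative hypothesis $\lw{K}>5$ gives enough room to iterate the slicing in a second, nearly orthogonal primitive direction, and composing the two slicings produces the correction factor $\floor{\max(\lw{K}/2-3,0)} \cdot \floor{\lw{K}/2 - 2}$.

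For the asymptotic statement, my plan is to construct an explicit family $(K_n)_{n\geq 1}$ of convex bodies with $\lw{K_n} \to \infty$ and $(2(k_n+1) - \area{K_n})/\lw{K_n}^2 \to 3/8$. Once this is in place, the hypothesis $\area{K_n} \leq 2(k_n+1) - f(\lw{K_n})$ forces $f(\lw{K_n}) \leq (3/8 + o(1))\lw{K_n}^2$, which yields the $\limsup$ bound. The main obstacle is that the most obvious first candidates do not attain $3/8$: right-isosceles lattice triangles of leg $w$ give the ratio $\to 1/2$, while scaled centrally symmetric hexagons give $\to 3/4$. Producing a family that attains $3/8$ asymptotically likely requires a delicate rational construction, for example triangles with vertices $(0,0), (L,0), (c_L L, d_L L)$ where $(c_L, d_L) \in \Q^2$ is tuned in $L$ so that all primitive-direction lattice widths agree with $L$ up to lower-order terms, while the interior lattice count is depressed below $\area{K_n}/2$ by a term of order $L$. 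Verifying that such a family exists and that $3/8$ is the precise threshold constant is the principal technical challenge, and it is where the fine structure of \emph{lattice width data} (beyond just $\lw{K}$) introduced earlier in the paper is most likely to enter.
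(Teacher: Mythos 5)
Your slicing setup in a width-minimizing direction is the right starting point, but both halves of your plan have gaps. For the main inequality, the base bound $2(k+1)$ does not follow from ``a one-dimensional Minkowski-type estimate'' plus concavity alone --- it is false for small lattice width (the triangles $T_{k,l}$ violate it), so the hypothesis must enter earlier than you place it. The paper's engine is Lemma \ref{LengthVerticalSegments}: \emph{because} the slicing direction is width-minimizing, the slice at distance $d$ from either end of $\pi_1(K)$ has length at least $\min(\lw{K}/2,\,d)$ (otherwise one exhibits a strictly narrower direction). This lower bound does two jobs at once: it guarantees $\ell_i\le k_i+1$ is chargeable line by line (giving $2(k+1)$ via Propositions \ref{OneLine}--\ref{3ormore}), and it forces the $i$-th interior integral vertical line from the edge to carry at least $\lceil\min(\lw{K}/2,i)\rceil-1$ interior lattice points. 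Summing this surplus over the lines near the middle yields $k\ge k_{i_{\max}}+(m-1)+2\sum_{i=1}^{\lfloor(m-5)/2\rfloor}i$, which is exactly the correction $\lfloor\frac{m-5}{2}\rfloor\lfloor\frac{m-3}{2}\rfloor$. Your proposed mechanism --- composing a second slicing in a ``nearly orthogonal primitive direction'' --- is the wrong one and would likely fail: the second direction is in general not a lattice width direction, so the slice-length lower bound that drives the whole argument is unavailable there, and the width in that direction need not be comparable to $\lw{K}$. Also note your bound ``at most $\lfloor\ell_j\rfloor$ interior lattice points per chord'' points the wrong way: to bound the area from above you need a \emph{lower} bound $k_j\ge\lceil\ell_j\rceil-1$, equivalently $\ell_j\le k_j+1$.

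For the asymptotic statement your strategy (exhibit a family with $(2(k+1)-\area{K})/\lw{K}^2\to\frac{3}{8}$) is exactly the paper's, but the family you leave as ``the principal technical challenge'' requires no delicate rational tuning: it is the integer dilates $mQ$ of the Fejes T\'oth--Makai extremal \emph{lattice} triangle $Q$, the area minimizer for given lattice width with $\area{Q}=\frac{3}{8}\lw{Q}^2$. The computation is then immediate from the exact area formula of Theorem \ref{LatticePolygonArea}: the deficiency is
\begin{align*}
2(k+1)-\area{mQ}=\frac{n(\Delta_{Q,smooth})}{2}-2+\area{F(mQ)}=\frac{5}{2}+\frac{3}{8}\left(\lw{mQ}-2\right)^2,
\end{align*}
since $F(mQ)=(m-1)Q$, and the ratio to $\lw{mQ}^2$ tends to $\frac{3}{8}$. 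The guiding heuristic you are missing is that the deficiency $2(k+1)-\area{P}$ is, up to bounded terms, the area of the convex hull of the interior lattice points, which is itself at least $\frac{3}{8}(\lw{P}-2)^2$; so the lattice-width-extremal triangles are automatically the right witnesses, and no family can do asymptotically better. (A minor point you would still need to address, as does the paper: the family only realizes lattice widths in a discrete set, so to control $\limsup_{x\to\infty}f(x)/x^2$ over all real $x$ one should interpolate, e.g.\ by real dilates of $Q$ together with a perimeter-order estimate on the lattice point count.)
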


So in the case of large lattice width we can even sharpen the classical bound $2(k+1)$. But what can we say for smaller lattice widths, where the classical bound cannot be correct because of the existence of triangles $T_{k,l}$? The following theorem gives an answer.

\begin{thm}\label{TheoremSmallWidth}
Let $K\subseteq \R^2$ be a convex body with $k:=|\interior{K}\cap \Z^2|\in \Z_{\geq 1}$.\\
Then we can explicitly describe area maximizers in the case of $\lw{K}\leq 1$ for given lattice width data, and we have
\begin{align*}
\area{K}\leq \begin{cases} \frac{\lw{K}^2}{2(\lw{K}-1)}(k+1) & \text{if } \lw{K} \in (1,2]\\
 2(k+1)+\frac{1}{2} & \text{if } \lw{K}\in (2,5],
\end{cases}
\end{align*}
where the bounds for $\lw{K}\in (1,2]$ are sharp. 
\end{thm}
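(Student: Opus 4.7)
My plan is to reduce the problem to a one-dimensional optimization of the horizontal cross-section profile of $K$. After a unimodular transformation I may assume the lattice width is attained in the vertical direction, so $K \subseteq \R \times [y_0, y_0+\lw{K}]$ where $y_0$ is determined by the lattice width data. Writing $f(y) := r(y) - l(y)$ for the horizontal cross-section length, $f$ is a non-negative concave function on this interval with $\area{K} = \int f$, and each horizontal integer line $y = n$ meeting $\interior{K}$ yields the pointwise constraint $f(n) \leq k_n+1$, where $k_n$ is the number of interior lattice points on that line. Maximizing $\int f$ under these concavity and positivity constraints is a tractable linear optimization in each of the three regimes.

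For $\lw{K} \leq 1$, the open vertical range of $K$ contains at most one integer, so after translation every interior lattice point lies on a single line $y=0$ in the block $\{(1,0),\dots,(k,0)\}$, with only the constraint $f(0) \leq k+1$. The extremizers of $\int f$ over concave $f \geq 0$ with $f(0) = k+1$ are piecewise affine with at most one break at $y=0$, i.e., triangles or trapezoids; translating back gives the explicit description of area maximizers, where the position of the strip relative to $y = 0$ (part of the lattice width data) controls the shape of the optimizer.

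For $\lw{K} \in (1,2]$, only the integer line $y=1$ lies strictly inside $[0, \lw{K}]$, so all $k$ interior lattice points form the block $\{(1,1),\dots,(k,1)\}$ and $f(1) \leq k+1$ is the single active constraint. A short slope analysis --- concave $f$ lies below its tangent at $y=1$, while $f(0), f(\lw{K}) \geq 0$ forces $-\frac{k+1}{\lw{K}-1} \leq f'(1^+) \leq f'(1^-) \leq k+1$ --- shows that $\int_0^{\lw{K}} f$ is maximized by the affine profile $f(y) = \frac{k+1}{\lw{K}-1}(\lw{K}-y)$, with value $\frac{\lw{K}^2}{2(\lw{K}-1)}(k+1)$. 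Sharpness follows from the triangle with vertices $(0,0)$, $\left(\frac{\lw{K}(k+1)}{\lw{K}-1}, 0\right)$, $(0,\lw{K})$, which realizes this profile and has exactly $k$ interior lattice points, all on $y=1$.

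For $\lw{K} \in (2,5]$ the interior can meet several integer lines $y = 1,\dots,\floor{\lw{K}}$ with occupancies $k_1,\dots$ summing to $k$, giving simultaneous constraints $f(j)\leq k_j+1$. Each admissible distribution $(k_j)$ leads to a finite-dimensional linear program for $\int f$, attained at a piecewise linear $f$. The plan is to enumerate the essentially distinct distributions (finitely many, up to reflection) and verify that the optimum in each case is bounded by $2(k+1) + \tfrac{1}{2}$; the critical extremal configuration is the triangle with vertices $(0,0)$, $(3,0)$, $(0,3)$ at $\lw{K}=3$ and $k=1$, whose area $\tfrac{9}{2}$ meets the bound. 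The main obstacle will be establishing this uniformly: ruling out configurations spread across multiple integer lines requires careful use of the lattice width data and the interaction between the constraints $f(j) \leq k_j+1$ and concavity, especially as $\lw{K}$ approaches the endpoints of the regime.
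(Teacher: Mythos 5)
Your framework---slice perpendicular to a lattice width direction, exploit concavity of the cross-section length $f$, and constrain $f$ at integer heights by the per-line point counts---is the same one the paper uses, and your treatment of the regimes $\lw{K}\leq 1$ and the one-interior-line subcase of $\lw{K}\in(1,2]$ matches Proposition \ref{OneLine} and Remark \ref{MaximizerLatticeWidth12}. But there is a genuine gap in the regime $\lw{K}\in(2,5]$: the linear program ``maximize $\int f$ over concave $f\geq 0$ with $f(j)\leq k_j+1$'' is strictly weaker than the geometric problem, because it only encodes that $K$ fits in a strip of width $\lw{K}$, not that \emph{every other} lattice direction has width at least $\lw{K}$. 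Concretely, take the strip $[1,6]$ (so $\lw{K}=5$, interior integer lines $j=2,3,4,5$), the distribution $k_2=1$, $k_3=k_4=k_5=0$ (so $k=1$), and the profile $f\equiv 1$: this is concave, nonnegative, satisfies every constraint $f(j)\leq k_j+1$, and has $\int f=5>4.5=2(k+1)+\tfrac12$. Your plan of ``enumerating admissible distributions and solving the LPs'' therefore cannot close unless you can rule this distribution out, and ruling it out is exactly the content of the paper's Lemma \ref{LengthVerticalSegments} and Corollary \ref{IntPointsVerticalSegments}: minimality of the width in the slicing direction forces a \emph{lower} bound $f(y)\geq\min\bigl(\tfrac{\lw{K}}{2},\,\mathrm{dist}(y,\partial\pi(K))\bigr)$ on the cross-sections, hence lower bounds on the $k_j$ (here $k_3,k_4\geq 2$, so $k=1$ is impossible at width $5$). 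This lower bound is proved by a shearing argument showing that a too-short slice would produce a lattice direction of smaller width; nothing in your proposal supplies it or a substitute, and without it the profile $f\equiv 1$ (a parallelogram, whose actual lattice width is far less than $5$) is indistinguishable from a genuine width-$5$ body.

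Two smaller points. First, for $\lw{K}\in(1,2]$ the open range of $K$ can contain \emph{two} integers (e.g.\ the strip $[0.5,2.4]$), so your single-constraint analysis does not cover all positions of the strip; the paper needs the two-line case of Proposition \ref{TwoLines} with $a+b<1$ to recover the same bound $\frac{\lw{K}^2}{2(\lw{K}-1)}(k+1)$ there. Second, even after the lower bounds on the $k_j$ are in place, the $(2,5]$ regime still requires the careful case analysis of Propositions \ref{TwoLines} and \ref{3ormore} (tracking the position of the longest slice and the boundary strips to isolate where the extra $+\tfrac12$ can occur); you correctly flag this as the main remaining work, but as written it is deferred rather than done.
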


In particular, this theorem implies that the rational triangles $T_{k,l}$ are area maximizers for $\lw{K}=1+\frac{1}{l}$.\\

Probably polytopes with integral points as vertices, henceforth also called \textit{lattice polytopes} (for the lattice $\Z^d\subseteq \R^d$), are among the best-studied non-symmetric convex bodies that have finite volume bounds if they have a fixed number of interior integral points. This was first proved in \cite{Hen83} and improved volume bounds can be found in \cite[Theorem 1]{LZ91}, \cite[p. 17]{Pik01}, and \cite[Theorem 1.4.]{AKN20}.

However, these bounds do not seem to be sharp, since the largest known volumes for lattice polytopes with a fixed number of interior points from \cite{WZP82} are much smaller. Nevertheless, we do have sharp bounds for some special types of lattice polygons, e.g., for reflexive polytopes (see \cite[Corollary 1.2.]{BKN22}) and for lattice simplices with a single interior integral point (see \cite{AKN15}).

We also know from \cite[p. 1024]{LZ91} that, up to affine unimodular equivalence, there are only finitely many lattice polytopes in $\R^d$ whose volume is less than a fixed bound. Thus, we can also obtain sharp bounds through the classification of all lattice polytopes in $\R^d$ with a fixed number of interior points. This was done for lattice 3-polytopes with exactly $1$ or $2$ interior integral points in \cite{Kas10} and \cite{BK16}.

In \cite[Theorem 1]{LZ91} and \cite[p. 17]{Pik01} we also have volume bounds for lattice polytopes with exactly $k$ interior points in a sublattice $(l\Z)^d, l\in \Z_{\geq 2}$, which corresponds to the case of rational polytopes with denominator $l$. But also these bounds are far from the largest known examples in \cite[2.6.]{LZ91}, and it was even shown in \cite[§6]{Pik01} that the methods used are not good enough to show such small bounds.

There are also some classification results for these polytopes. In \cite[Theorem 4]{AKW17}, maximal half-integral lattice-free polygons - which is the case $d=2, k=0, l=2$ - were classified on the way of classifying all $\Z^3$-maximal lattice-free lattice 3-polytopes. In \cite[4.11.]{HHS25} we have the classification of almost $2$- and $3$-hollow LDP-polygons, a subcase of $d=2, k=1,$ $l\in \{2,3\}$, which was necessary for the work on $\varepsilon$-log canonical del Pezzo $\mathbb{K}^\ast$-surfaces.\\

It turns out that our methods are good enough to handle the case of rational polygons. As a corollary to the theorems above, we obtain in \ref{BoundRationalPolygons} that the triangle $T_{k,l}$ is indeed also an area maximizing rational polygon with denominator $l\in \Z_{\geq 2}$.

The case of lattice polygons is different. For lattice polygons $P_{k,1}\subseteq \R^2$ with $k$ interior integral points, there is an additional $+\frac{1}{2}$ and so
\begin{align*}
\area{P_{k,1}}\leq 2(k+1)+\frac{1}{2}
\end{align*}
and the convex hull $\conv{(0,0),(3,0),(0,3)}$ is the single area maximizer up to affine unimodular equivalence in this case, a result first proven combinatorially by Scott in \cite{Sco76}. However, there is also an area bound
\begin{align*}
\area{P_{k,1}}\leq 2(k+1)+2-\frac{n}{2}
\end{align*}
additionally depending on the number of vertices $n$, which was conjectured by Coleman in \cite[p. 54]{Col78} and then proved combinatorially in \cite{KO07}. In these cases, we write \textit{combinatorially proved} because there are older results from algebraic geometry that imply these results via the language of toric geometry. For example, Castryck showed in \cite[p. 506]{Cas12} that Coleman's conjecture was essentially already proven by Koelman in \cite[4.5.2.2]{Koe91}, and in \cite[2.3]{HS09} we see that Scott's volume bound corresponds to algebro-geometric results of del Pezzo and Jung.\\

In the final step, we combine two \textit{number 12-formula}, which originated from Noether's formula via toric geometry, to derive an equation for the area of a lattice polygon. This equation is presented in the following theorem, which not only implies Coleman's conjecture, but also helps us to handle the asymptotics in Theorem \ref{TheoremBigWidth}.

\begin{thm}\label{LatticePolygonArea}
Let $K\subseteq \R^2$ be a lattice polygon with $k:=|\interior{K}\cap \Z^2|\in \Z_{\geq 1}$, $n(\Delta_{P,smooth})$ the number of rays in the smooth refinement of the normal fan $\Delta_P$ of $P$, which we get from the rays generated by the boundary lattice points on the bounded edges of 
$\conv{\sigma \cap (\Z^2\setminus \{0\})}$ for each cone $\sigma$ of $\Delta_P$. Then
\begin{align*}
\area{P}=2(k+1)+2-\frac{n(\Delta_{P,smooth})}{2}-\area{\conv{\interior{P}\cap \Z^2)}}.
\end{align*}
\end{thm}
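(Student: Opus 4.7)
The plan is to combine two toric-geometric inputs on the smooth projective toric surface $Y := X(\Delta_{P,smooth})$: Koelman's formula, which identifies an adjoint polytope with the convex hull of interior lattice points, and Noether's classical ``number-12 formula'' on a smooth rational surface. Let $\pi : Y \to X_P$ be the induced birational morphism, and set $\tilde L := \pi^* L_P$ and $P^\circ := \conv{\interior{P} \cap \Z^2}$. The strategy is to compute $(\tilde L + K_Y)^2$ in two different ways: once by identifying it with $2\area{P^\circ}$ (Koelman), and once by expanding the square using $K_Y^2 = 12 - n(\Delta_{P,smooth})$ (Noether); the desired area equation then drops out after applying Pick's theorem.

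The crux --- Koelman's formula in our setting --- is the identification of the polytope associated to $\tilde L + K_Y$ with $P^\circ$, which, granted nefness of $\tilde L + K_Y$, yields $(\tilde L + K_Y)^2 = 2\area{P^\circ}$. The inclusion $P^\circ \subseteq P_{\tilde L + K_Y}$ is the easy half: the support function $\psi_{L_P}$ takes integer values on all lattice vectors (since $\tilde L$ is Cartier on the smooth $Y$), and for any $m \in \interior{P} \cap \Z^2$ and any ray $\rho$ of $\Delta_{P,smooth}$ one has $\<m, u_\rho\> > \psi_{L_P}(u_\rho)$ by linearity of the piecewise-linear extension on each $2$-cone of $\Delta_P$, which integrality upgrades to $\<m, u_\rho\> \geq \psi_{L_P}(u_\rho) + 1$. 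The reverse inclusion is the main technical obstacle: one must verify that the new rays added in the Hirzebruch--Jung smooth refinement --- those generated by the boundary lattice points on the bounded edges of $\conv{\sigma \cap (\Z^2 \setminus \{0\})}$ --- provide \emph{enough} cutting half-planes so that no point outside $\conv{\interior{P} \cap \Z^2}$ can simultaneously satisfy every constraint.

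Granting this identification, Noether's formula on the smooth rational surface $Y$ reads $K_Y^2 + \chi_{top}(Y) = 12\,\chi(\mathcal{O}_Y) = 12$; since $\chi_{top}(Y) = n(\Delta_{P,smooth})$ for a smooth complete toric surface, we obtain $K_Y^2 = 12 - n(\Delta_{P,smooth})$. Expand $(\tilde L + K_Y)^2 = \tilde L^2 + 2\tilde L \cdot K_Y + K_Y^2$, using $\tilde L^2 = L_P^2 = 2\area{P}$ and $\tilde L \cdot K_Y = -b(P)$ with $b(P) := |\partial P \cap \Z^2|$; the latter follows from the projection formula together with the observation that $\tilde L$ has zero intersection with the exceptional divisors of $\pi$ and pairs to the lattice length of the corresponding edge against each old ray. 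Comparing the two expressions for $(\tilde L + K_Y)^2$ gives
\begin{align*}
2\area{P^\circ} = 2\area{P} - 2 b(P) + 12 - n(\Delta_{P,smooth}),
\end{align*}
and eliminating $b(P)$ via Pick's theorem $b(P) = 2\area{P} - 2k + 2$ rearranges into the claimed identity. The whole proof thus reduces to a short intersection-theoretic computation, with the only real difficulty being the polytope identification in the second paragraph.
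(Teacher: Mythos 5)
Your reduction of the identity to the single equation $(\tilde L + K_Y)^2 = 2\,\area{\conv{\interior{P}\cap \Z^2}}$ is correct and the surrounding intersection theory is sound: $K_Y^2 = 12 - n(\Delta_{P,smooth})$ from Noether, $\tilde L^2 = 2\area{P}$, $\tilde L\cdot K_Y = -b(P)$ via the projection formula, and the final elimination of $b(P)$ by Pick's theorem all check out and do reproduce the claimed formula. The route is also genuinely different from the paper's, which never works on the surface $Y$ directly: the paper splits into three cases according to $\dim F(P)$ for $F(P):=\conv{\interior{P}\cap\Z^2}$ and argues purely combinatorially, using Pick plus the Poonen--Rodriguez-Villegas identity $b(P)+b(P^\ast)=12$ when $\dim F(P)=0$, an explicit classification (rectangles cut by at most four half-planes) when $\dim F(P)=1$, and Pick for both $P$ and $F(P)$ together with Koelman's identity $b(P)-b(F(P))=12-n(\Delta_{P,smooth})$ when $\dim F(P)=2$.

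However, as written your argument has a genuine gap exactly where you flag ``the main technical obstacle'': the equation $(\tilde L + K_Y)^2 = 2\,\area{F(P)}$ is asserted, not proved, and it needs \emph{two} separate inputs, only one of which you even attempt. First, the reverse inclusion $P_{\tilde L + K_Y}\subseteq F(P)$ is a real theorem (essentially Koelman's), not a routine verification: the rational polytope cut out by the inequalities $\langle m,u_\rho\rangle\geq \psi_{L_P}(u_\rho)+1$ over the rays of $\Delta_P$ alone is strictly larger than $F(P)$ in general, and one must show that the Hirzebruch--Jung rays always supply the missing supporting half-planes; you state this must be verified and then do not verify it. Second, even granting the polytope identification, the step $(\tilde L+K_Y)^2 = 2\,\area{P_{\tilde L+K_Y}}$ requires $\tilde L+K_Y$ to be nef (or at least globally generated), which you assume without argument. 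This is precisely delicate in the degenerate cases $\dim F(P)\leq 1$, where $\tilde L+K_Y$ has self-intersection $0$ and nefness is equivalent to base-point-freeness of the adjoint system; these are exactly the cases the paper treats by separate ad hoc arguments. Both missing pieces are contained in Koelman's thesis, so the proof closes up if you import his results wholesale, but then your contribution reduces to the same black box the paper already cites in Lemma 4.9 and in its $\dim F(P)=1$ case; as a self-contained proof the proposal is incomplete at its central step.
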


The article is organized as follows: In the second section, we introduce the concept of lattice width data for planar convex bodies. A central result is that we prove bounds for the vertical slicing lengths when we have $(1,0)$ as a lattice width direction. This also gives bounds for the number of interior integral points on integral vertical lines.

In the third section, we compute area bounds for planar convex bodies based on their  their lattice width data and the number of interior integral points. As a corollary, we obtain sharp area bounds for rational polygons, depending on the number of interior integral points and the denominator of the polygon. The proof of Theorem \ref{TheoremSmallWidth} is also provided at the end of the third section.

In the final section, we focus on lattice polygons. First, we provide a straightforward proof of Scott's area bound using the lattice width. Then, we use two variants of 'number 12'-theorems to prove Theorem \ref{LatticePolygonArea}, which allows us to estimate the error term of Scott's area bound in terms of the number of vertices and the lattice width. This also gives the asymptotic behavior of the bound in Theorem \ref{TheoremBigWidth}, which we can prove at the end of the last section.

\newpage

\section{Lattice width data for planar convex bodies}

In this section, we define the \textit{lattice width data}, which is a generalization of the concept of lattice width. By choosing appropriate coordinates that respect these lattice width data in \ref{LatticeWidthCoordinates}, we can control the vertical slicing lengths and the number of integral points on vertical lines of a planar convex body. These are the main results of this section in Theorem \ref{LengthVerticalSegments} and Corollary \ref{IntPointsVerticalSegments}, and we can use these results to obtain area bounds by integration later in the next section.

\begin{defi}
Let $K\subseteq \R^2$ be compact and convex with a nonempty interior. Then we call $K$ a planar convex body.
\end{defi}

\begin{defi}
Let $K\subseteq \R^2$ be a convex body. The \textit{width function of $K$} is defined by
\begin{align*}
\text{width}_K\colon (\R^2)^\ast\setminus \{0\} \to \R_{>0}, v\mapsto \max_{x\in K}v(x) - \min_{x\in K}v(x)
\end{align*}
and we call $\textit{width}_K(v)$ the \textit{width of $K$ in direction $v$}.
\end{defi}

\begin{rem}
The width function is a continuous function and therefore attains its infimum on any compact set, in particular on $S^1:=\{v\in (\mathbb{R}^2)^\ast \mid \|v\|=1 \}$. We can use this minimum to define a \textit{geometrical width} of $K$.
	
Since $\text{width}_K(\lambda v)=|\lambda|\text{width}_K(v)$ for all $\lambda \in \R\setminus \{0\}$, the width function also attains its infimum on $(\Z^2)^\ast\setminus \{0\}$ and so we can define a \textit{lattice width} depending on our lattice $\Z^2$ as follows.
\end{rem}

\begin{defi}
The \textit{lattice width of $K$} - denoted as $\lw{K}$ - is defined by
\begin{align*}
\lw{K}:=\min_{v\in (\Z^2)^\ast \setminus \{0\}} \text{width}_K(v). 
\end{align*}
A non-zero dual lattice vector $w \in (\Z^2)^\ast\setminus \{0\}$ is called a \textit{lattice width direction} of $K$, if $\text{width}_K(w)=\lw{K}$.
\end{defi}

\begin{rem}
If $w=(w_1,w_2)\in (\Z^2)^\ast$ is a lattice width direction of $K$, then $w$ is a primitive dual vector, i.e. $\gcd(w_1,w_2)=1$, and $-w$ is also a lattice width direction of $K$. By \cite[Theorem 1.1.]{DMN12} there are at most $8$ different lattice width directions for a planar convex body.
\end{rem}

\begin{lemma}\label{BrunnMinkowski}
Let $K\subseteq \R^2$ be a convex body, $\pi_1: \R^2\to \R, (x_1,x_2)\mapsto x_1$.\\
Then we have a concave function 
\begin{align*}
l_{K,\pi_1}: \pi_1(K) \to \R, t \mapsto | K \cap \{x_1=t\} |
\end{align*}
by measuring the length $|\cdot|$ of the vertical line segments. In particular, $l_{K,\pi_1}$ attains its maximum on a compact interval and is increasing before and decreasing afterwards.
\end{lemma}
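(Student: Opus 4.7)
The plan is to derive concavity directly from the convexity of $K$, without invoking the full Brunn--Minkowski machinery (which in this planar, $1$-dimensional-slice situation reduces to an elementary argument). First I would observe that $\pi_1(K)$ is a compact interval since $K$ is compact and convex and $\pi_1$ is continuous and linear. For each $t\in\pi_1(K)$, the slice $K\cap\{x_1=t\}$ is the intersection of a convex set with a line, hence a closed segment (possibly a point at the endpoints of $\pi_1(K)$); writing its endpoints as $(t,a(t))$ and $(t,b(t))$ with $a(t)\le b(t)$, we have $l_{K,\pi_1}(t)=b(t)-a(t)$.

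The core step is concavity. Fix $t_1,t_2\in\pi_1(K)$ and $\lambda\in[0,1]$, and set $t:=\lambda t_1+(1-\lambda)t_2$. By convexity of $K$, both of the segments joining $(t_1,a(t_1))$ to $(t_2,a(t_2))$ and joining $(t_1,b(t_1))$ to $(t_2,b(t_2))$ lie entirely in $K$. At horizontal coordinate $t$ these two segments pass through the points
\begin{align*}
\bigl(t,\,\lambda a(t_1)+(1-\lambda)a(t_2)\bigr) \quad\text{and}\quad \bigl(t,\,\lambda b(t_1)+(1-\lambda)b(t_2)\bigr),
\end{align*}
and convexity of $K$ implies that the entire vertical segment between them lies in $K\cap\{x_1=t\}$. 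Taking lengths gives
\begin{align*}
l_{K,\pi_1}(t)\;\ge\;\lambda\bigl(b(t_1)-a(t_1)\bigr)+(1-\lambda)\bigl(b(t_2)-a(t_2)\bigr)\;=\;\lambda\, l_{K,\pi_1}(t_1)+(1-\lambda)\,l_{K,\pi_1}(t_2),
\end{align*}
which is the required concavity inequality.

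The ``in particular'' statement is then a standard property of concave functions on a compact interval $[\alpha,\beta]=\pi_1(K)$: since $l_{K,\pi_1}$ is concave and bounded, it attains its supremum, and the set $M:=\{t\in[\alpha,\beta]\mid l_{K,\pi_1}(t)=\max l_{K,\pi_1}\}$ is convex (hence an interval) and closed (by upper semicontinuity of concave functions on the closure of their domain, or directly by continuity on the interior combined with the explicit endpoint behavior). For $t<\min M$ and $t'$ with $t<t'\le\min M$, writing $t'$ as a convex combination of $t$ and $\min M$ and applying concavity forces $l_{K,\pi_1}(t)\le l_{K,\pi_1}(t')$; the symmetric argument handles $t>\max M$. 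I do not anticipate a substantive obstacle: the only mild subtlety is handling the endpoints of $\pi_1(K)$, where the slice may degenerate to a single point (length $0$), but this is consistent with the concavity inequality and the monotonicity conclusion since a concave function on a compact interval need only be non-decreasing, then non-increasing, not strictly so.
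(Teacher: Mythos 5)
Your proof is correct and takes essentially the same approach as the paper: both arguments reduce the claim to the elementary fact that the upper boundary of $K$ is the graph of a concave function and the lower boundary the graph of a convex function, so that $l_{K,\pi_1}=b-a$ is concave. You verify the concavity inequality directly from the definition (via the two chords joining the slice endpoints), whereas the paper phrases it as a sum of two concave functions; the content is identical, and your treatment of the ``in particular'' clause and of the degenerate endpoint slices is a sound filling-in of details the paper leaves implicit.
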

\begin{proof}
If we intersect $K$ with a line through two points, one with a minimal and the other with a maximal first coordinate, we can decompose the area of $K$ into a convex top between the graph of a concave and a linear function, and a convex bottom between the graph of a linear function and a convex function. So we get the vertical line segment length function by adding two concave functions, and so $l_{K,\pi_1}$ is also concave.
\end{proof}

\begin{rem}
Using the Brunn-Minkowski inequality, we can get a similar result for the slice volumes of convex bodies of any dimension (e.g. \cite[12.2.1]{Mat02}).
\end{rem}

\begin{defi}
Let $K \subseteq \R^2$ be a convex body. We call the midpoint of the compact interval from Lemma \ref{BrunnMinkowski}, in which $K$ has the longest vertical slicing length, the \textit{position of the longest vertical slicing length} and write it as $\plvsl{K}$.
\end{defi}

\begin{rem}\label{AffineDiameter}
A vertical line segment of $K$ is of maximum length if and only if it is an affine diameter of $K$, i.e., there are parallel supporting lines of $K$ at the endpoints of the segment. For an overview of affine diameters, see \cite{Sol05}, where we can also find this characterization in \cite[3.1.]{Sol05}.
\end{rem}

\begin{lemma}\label{LatticeWidthCoordinates}
Let $K\subseteq \R^2$ be a convex body, $w\in (\Z^2)^\ast \setminus \{0\}$ a lattice width direction of $K$ and $\pi_1: \R^2\to \R, (x_1,x_2)\mapsto x_1$. Then there is an affine unimodular transformation $U_{A,b}$, i.e.
\begin{align*}
U_{A,b}: \R^2\to \R^2, x\mapsto Ax+b, \text{ with } A\in \mathbf{GL}(2,\Z), b\in \Z^2,
\end{align*}
so that the image $U_{A,b}(K)$ has a lattice width direction $wA^{-1}\in \{(-1,0),(1,0)\}$, we have $x_l,x_r\in \R$,  $0\leq x_l<1$ with $[x_l,x_r]=\pi_1(U_{A,b}(K))$ and 
\begin{align*}
2\cdot \plvsl{U_{A,b}(K)}\leq \ceil{x_l}+\floor{x_r}.
\end{align*}	
Moreover, the interval $[x_l,x_r]$ and $\plvsl{U_{A,b}(K)}$ are well-defined by $K$ and $w$, if we additionally ask for $\ceil{x_l}-x_l\leq x_r-\floor{x_r}$ in the case with
\begin{align*}
2\cdot \plvsl{U_{A,b}(K)}= \ceil{x_l}+\floor{x_r}.
\end{align*}
\end{lemma}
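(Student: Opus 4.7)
The plan is to first construct a candidate transformation via completion of the primitive vector $w$ to a lattice basis, then to exhibit a reflected companion, and finally to use a small averaging identity to settle both existence and uniqueness of the horizontal data.

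For existence, I first note that any lattice width direction $w = (w_1, w_2)$ must be primitive, since otherwise $w / \gcd(w_1, w_2)$ would realize a strictly smaller width. Completing $w$ to a basis of $(\Z^2)^\ast$ yields $A_0 \in \mathbf{GL}(2, \Z)$ whose first row is $w$, so that $w A_0^{-1} = (1, 0)$ is a lattice width direction of $A_0(K)$. Writing $\pi_1(A_0(K)) = [a, b]$ and $p_0 = \plvsl{A_0(K)}$, I consider two candidates: the direct transformation $U_1 := U_{A_0,\, (-\floor{a},\, 0)}$ and the reflected one $U_2 := U_{\mathrm{diag}(-1, 1)\, A_0,\, (\ceil{b},\, 0)}$. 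A direct calculation yields the horizontal data
\begin{align*}
x_l &= a - \floor{a}, & x_r &= b - \floor{a}, & \plvsl{U_1(K)} &= p_0 - \floor{a}, \\
x_l' &= \ceil{b} - b, & x_r' &= \ceil{b} - a, & \plvsl{U_2(K)} &= \ceil{b} - p_0,
\end{align*}
automatically with $x_l, x_l' \in [0, 1)$. Vertical shears of $A_0$ and vertical translations do not affect this horizontal data, so every admissible $(A, b)$ reduces to one of these two candidates.

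The key identity, which I would verify by a brief case split on whether $a, b \in \Z$, is
\begin{align*}
\bigl(\ceil{x_l} + \floor{x_r}\bigr) + \bigl(\ceil{x_l'} + \floor{x_r'}\bigr) = 2(\ceil{b} - \floor{a}) = 2\plvsl{U_1(K)} + 2\plvsl{U_2(K)}.
\end{align*}
Thus the two defects $2\plvsl{U_1(K)} - \ceil{x_l} - \floor{x_r}$ and $2\plvsl{U_2(K)} - \ceil{x_l'} - \floor{x_r'}$ sum to zero, so at least one of them is $\le 0$, giving existence. If one defect is strictly negative the other is strictly positive, and uniqueness of the horizontal data follows immediately.

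The main obstacle, and the reason the tie-breaking clause is needed, is the case where both defects vanish. A direct computation shows that reflection swaps the pair $(\ceil{x_l} - x_l,\, x_r - \floor{x_r})$: it equals $(\ceil{a} - a,\, b - \floor{b})$ on $U_1(K)$ and $(b - \floor{b},\, \ceil{a} - a)$ on $U_2(K)$. Hence the tie-break $\ceil{x_l} - x_l \le x_r - \floor{x_r}$ selects exactly one candidate, unless $\ceil{a} - a = b - \floor{b}$, in which case it holds for both; but then, combining this with the identity $2 p_0 = \ceil{a} + \floor{b}$ extracted from the vanishing-defect equation, a short verification forces $U_1(K)$ and $U_2(K)$ to produce identical horizontal data, yielding the well-definedness claim.
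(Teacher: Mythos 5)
Your argument is correct and follows the same route as the paper's proof: complete the primitive $w$ to a basis of $(\Z^2)^\ast$, normalize the first coordinate of the translation so that $0\leq x_l<1$, and observe that the only remaining freedom affecting the horizontal data is the sign of the first row, leaving exactly two candidates. The added value of your write-up is that where the paper merely asserts that switching to the reflected candidate repairs a failure of $2\cdot\plvsl{U_{A,b}(K)}\leq \ceil{x_l}+\floor{x_r}$, you actually prove it via the defect-sum identity (which I checked: $\ceil{x_l}+\floor{x_r}+\ceil{x_l'}+\floor{x_r'}=2\ceil{b}-2\floor{a}=2\plvsl{U_1(K)}+2\plvsl{U_2(K)}$), and you explicitly treat the one configuration the paper's well-definedness remark glosses over, namely when both sign choices satisfy all conditions: there the symmetric tie $\ceil{a}-a=b-\floor{b}$ together with $2p_0=\ceil{a}+\floor{b}$ forces the two candidates to yield identical $[x_l,x_r]$ and $\plvsl{\cdot}$, so the data is well defined even though $A$ itself is not. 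The computations you leave as short verifications all check out.
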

\begin{proof}
Since $w$ is a primitive dual vector, we can find another dual vector that complements $w$ to a lattice basis of $(\Z^2)^\ast$. By choosing $w$ as the first row and the other dual vector as the second row, we get a suitable unimodular matrix $A$. Now we have a unique first coordinate for $b$ to additionally get $0\leq x_l<1$.\\
If we were in the situation
\begin{align*}
2\cdot \plvsl{U_{A,b}(K)}> \ceil{x_l}+\floor{x_r}
\end{align*}
or
\begin{align*}
\ceil{x_l}-x_l> x_r-\floor{x_r} \text{ and } 2\cdot \plvsl{U_{A,b}(K)}= \ceil{x_l}+\floor{x_r},
\end{align*}
then we would choose $-A$ instead of $A$.

Note that the first row of $A$ and the first coordinate of $b$ are determined by the conditions, so both $[x_l,x_r]$ and $\plvsl{U(K)}$ are well-defined.
\end{proof}

\begin{defi}
Let $K\subseteq \R^2$ be a convex body.\\
If $w$ is a lattice width direction for $K$ and $U_{A,b}$, $[x_l,x_r]$ are as in \ref{LatticeWidthCoordinates}, then we define the \textit{lattice width data of $K$ for the lattice width direction $w$} as the pair
\begin{align*}
\lwd{w}{K} :=	([x_l,x_r], \plvsl{U_{A,b}(K)})
\end{align*}
and we call $\ceil{x_r}-\floor{x_l}-1$ the \textit{number of interior integral vertical lines for the lattice width direction $w$}.
\end{defi}

\begin{lemma}\label{LengthVerticalSegments}
Let $K\subseteq \R^2$ be a planar convex body, $(1,0)$ a lattice width direction of $K$, and $\pi_1(K)=[x_l,x_r]$.\\Then we have for the length $|\cdot |$ of the vertical line segments
\begin{align*}
|K\cap \{(x_1,x_2)\in \R^2 \mid x_1=h\}|\geq 
\begin{cases}
\min\left(\frac{x_r-x_l}{2},h-x_l\right) & \text{ if } x_l\leq h\leq  \plvsl{K}\\
\min\left(\frac{x_r-x_l}{2},x_r-h\right) & \text{ if } x_r\geq h\geq  \plvsl{K}.
\end{cases}
\end{align*}
\end{lemma}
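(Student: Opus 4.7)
The strategy is to combine the concavity of $l := l_{K,\pi_1}$ from Lemma~\ref{BrunnMinkowski} with the affine diameter description in Remark~\ref{AffineDiameter} and with the lattice width condition for $(1,0)$. Write $W := x_r - x_l$, $c := \plvsl{K}$, and let $[c_1, c_2]$ be the compact sub-interval on which $l$ attains its maximum $L$. By Remark~\ref{AffineDiameter} the vertical chord of length $L$ at $x = c_1$ is an affine diameter, so there exist parallel supporting lines of a common slope $m$ at its two endpoints, confining $K$ to a slanted strip. Intersecting the strip with $\{x_l \leq x \leq x_r\}$ yields a bounding parallelogram $\Pi \supseteq K$ whose width in the lattice direction $(a,b)$ equals $|a + bm|\,W + |b|\,L$.

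The first step is to show $L \geq W/2$. Since $K \subseteq \Pi$, the width of $K$ in any direction is bounded by that of $\Pi$, while the hypothesis gives width of $K$ in every non-zero lattice direction at least $W$. Applying this with $v = (-k, 1)$ for the integer $k$ closest to $m$, and with $v = (1, 1)$ and $v = (1, -1)$, yields $L \geq W(1 - |m - k|)$ together with $L \geq |m|\,W$ or $L \geq (1 - |m|)\,W$ (depending on the sign of $m$). Combining gives $L \geq \max(|m|, 1 - |m|)\,W \geq W/2$. In particular $l(h) = L \geq W/2$ on the plateau $[c_1, c_2]$, so the claimed bound is immediate for $h \in [c_1, c]$.

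For $h \in [x_l, c_1]$ the quadrilateral with vertices $(x_l, G(x_l))$, $(x_l, F(x_l))$, $(c_1, F(c_1))$, $(c_1, G(c_1))$ (where $F, G$ are the upper and lower boundary functions of $K$) is inscribed in $K$ by convexity, so linear interpolation gives
\[
l(h) \;\geq\; \frac{c_1 - h}{c_1 - x_l}\, l(x_l) \;+\; \frac{h - x_l}{c_1 - x_l}\, L.
\]
If $c_1 - x_l \leq W/2$, dropping the $l(x_l)$-term and using $L \geq W/2$ gives $l(h) \geq \frac{h - x_l}{c_1 - x_l} \cdot \tfrac{W}{2} \geq h - x_l$ on the whole interval, and this is $\geq \min(W/2, h - x_l)$.

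The remaining case $c_1 - x_l > W/2$ is the crux and I expect to be the main obstacle. Here the triangle bound alone does not suffice, and one must extract an additional lower bound on $l(x_l)$ from the lattice width condition in a diagonal direction. The plan is to show that among $(1, 1)$ and $(1, -1)$, the one opposite in sign to $m$ becomes tight, forcing $K$ to reach the far side of the strip at $x = x_l$ and yielding a bound of the form $l(x_l) \geq \frac{W\,(c_1 - x_l - L)}{2(c_1 - x_l) - W}$. Substituted into the interpolation above, this is exactly the amount needed so that $l(x_l + W/2) \geq W/2$, after which concavity of $l$ propagates the inequality to all of $[x_l, c_1]$. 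The symmetric estimate on $[c, x_r]$ then follows either by reflecting $K$ across $\{x = (x_l + x_r)/2\}$ (which preserves lattice width directions up to sign and swaps the two endpoints) or by rerunning the inscribed-trapezoid step with $x_r$ in place of $x_l$.
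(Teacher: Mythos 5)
Your first step is correct and genuinely different from the paper's argument: enclosing $K$ in the slanted parallelogram $\Pi$ determined by the affine diameter at $x=c_1$ and testing the single direction $(-k,1)$, with $k$ the nearest integer to $m$, already gives $L\geq(1-|m-k|)W\geq W/2$ (the extra tests with $(1,\pm1)$ are redundant), and this settles the plateau and, via concavity, the whole range whenever $c_1-x_l\leq W/2$. The problem is that the case $c_1-x_l>W/2$, which you yourself flag as the crux, is not proved: the inequality $l(x_l)\geq \frac{W(c_1-x_l-L)}{2(c_1-x_l)-W}$ is exactly what your interpolation needs, but you only announce a ``plan'' for it, and the justification sketched (that one of $(1,\pm1)$ ``becomes tight, forcing $K$ to reach the far side of the strip at $x=x_l$'') is not an argument: the lattice width condition only gives \emph{lower} bounds $\text{width}_K(v)\geq W$, nothing forces any particular integral direction to be tight for $K$, and tightness against the outer body $\Pi$ would not pin down $K$ itself at $x=x_l$. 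So there is a genuine gap exactly where the statement has content.

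For comparison, the paper's proof avoids this global bookkeeping entirely. It assumes the bound fails at some $h_0\leq\min\left(\frac{x_l+x_r}{2},\plvsl{K}\right)$, takes the two supporting lines of $K$ at $x_1=h_0$ (not at the maximizer of $l$), normalizes the slope of the upper one into $[0,1)$ by an integral shear --- which changes neither vertical chord lengths nor the fact that $(1,0)$ is a lattice width direction --- and then shows, in both cases for the slope of the lower supporting line, that $\text{width}_K((0,1))<x_r-x_l=\text{width}_K((1,0))$, contradicting minimality of the width in direction $(1,0)$. Working at the failing point $h_0$ is what makes the estimate close, since the hypothesis $l(h_0)<h_0-x_l$ enters the width computation directly. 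If you want to salvage your route, the analogous move is to shear so that $m$ lies in $[0,1)$ and run a width comparison using supporting lines at the point where your interpolation first fails, rather than trying to extract a lower bound on $l(x_l)$.
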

\begin{proof}
It is enough to prove that
\begin{align*}
|K\cap \{(x_1,x_2)\in \R^2 \mid x_1=h\}|\geq h-x_l
\end{align*}
for
\begin{align*}
x_l < h < \min\left(\frac{x_l+x_r}{2} ,\plvsl{K}\right)
\end{align*}
because of the monotony of the function measuring the vertical slicing length on $[x_l, \plvsl{K}]$ by \ref{BrunnMinkowski} and the symmetric situation in the second case.
	
We assume that there is a $h_0$ with
\begin{align*}
x_l<h_0 < \min\left(\frac{x_l+x_r}{2} , \plvsl{K}\right)
\end{align*}
and 
\begin{align*}
|K\cap \{(x_1,x_2)\in \R^2 \mid x_1=h_0\}|< h_0-x_l.
\end{align*}
Now, we look at two supporting lines $t_a, t_b$ - thought as graphs of two affine linear functions $x_1\mapsto t_a(x_1), x_1\mapsto t_b(x_1)$ - supporting $K$ in $x_1=h_0$ from above in the point $(h_0,y_a):=(h_0,t_a(h_0))$ and from below in $(h_0,y_b):=(h_0,t_b(h_0))$.
	
Since an integral shearing $U_s, s\in \Z,$ in the direction of the $x_2$ axis, i.e., a unimodular transformation
\begin{align*}
U_s: \R^2\to \R^2, \begin{pmatrix}x_1\\x_2\end{pmatrix}\mapsto \begin{pmatrix}1&0\\s&1\end{pmatrix}\begin{pmatrix}x_1\\x_2\end{pmatrix},
\end{align*}
does not change the length of the vertical line segments and $(1,0)$ as a lattice width direction, we can assume without loss of generality that
\begin{align*}
0\leq y_a-t_a(h_0-1)<1.
\end{align*}
In particular, we can assume that $t_a$ is increasing.

Now we can look at the two cases 
\begin{align*}
y_b-t_b(h_0-1)\leq 0
\end{align*}
and 
\begin{align*}
0< y_b-t_b(h_0-1)<1,
\end{align*}
because the case $y_b-t_b(h_0-1)\geq 1$ is impossible due to $ y_a-t_a(h_0-1)<1$ and the monotonically increasing vertical length on $[x_l,\plvsl{K}]$ by \ref{BrunnMinkowski}.

In the first case, we have $y_b-t_b(h_0-1)\leq 0 \leq y_a-t_a(h_0-1)$ and therefore $t_b$ is decreasing and $t_a$ is increasing, so we get for $\pi_2\colon \R^2\to \R, (x_1,x_2)\mapsto x_2$ that
\begin{align*}
\text{width}_K((0,1))=&\max \pi_2(K)-\min \pi_2(K)\\
\leq&\frac{x_r-x_l}{h_0-x_l}\cdot |K\cap \{(x_1,x_2)\in \R^2 \mid x_1=h_0\}|\\
<&x_r-x_l\\
=&\text{width}_K((1,0)),
\end{align*}
so $(1,0)$ is not a width direction and we have a contradiction.\\
	
The second case, $0< y_b-t_b(h_0-1)<1$, is more complicated, because now $t_b$ increases as well. However, in the second case we can also assume that
\begin{align*}
y_a-t_a(h_0-1)\leq 1-(y_b-t_b(h_0-1)),
\end{align*}
because if this is not the case, we can use the integral shearing $U_{-1}$ defined above and a reflection on the $x_1$-axis to achieve this.

Thus, we obtain
\begin{align*}
&\text{width}_K((0,1))\\
=&\max \pi_2(K)-\min \pi_2(K)\\
\leq& (x_r-h_0)(y_a-t_a(h_0-1))+|K\cap \{(x_1,x_2)\in \R^2 \mid x_1=h_0\}|+\\
&\qquad (h_0-x_l)(y_b-t_b(h_0-1))\\
<& (x_r-h_0)(1-(y_b-t_b(h_0-1)))+h_0-x_l+ (h_0-x_l)(y_b-t_b(h_0-1))\\
=&x_r-x_l-(x_l+x_r)y_b-2h_0t_b(h_0-1)+(x_l+x_r)t_b(h_0-1)+2h_0y_b	\\	
=&x_r-x_l+(2h_0-(x_l+x_r))(y_b-t_b(h_0-1))\\
\leq&x_r-x_l\\
=&\text{width}_K((1,0)),
\end{align*}
where the last inequality follows from the fact that $2h_0\leq x_l+x_r$ by definition of $h_0$.

Thus, $(1,0)$ is not a width direction in this case either, and we have a contradiction.
\end{proof}

\begin{coro}\label{IntPointsVerticalSegments}
Let $K\subseteq \R^2$ be a planar convex body, $(1,0)$ a width direction of $K$, and $\pi_1(K)=[x_l,x_r]$.\\Then we have for the number of interior integral points on vertical line segments
\begin{align*}
&| \interior{K}\cap \{(x_1,x_2)\in \Z^2 \mid x_1=h\} |\\
\geq& \begin{cases}
\ceil{\min\left(\frac{x_r-x_l}{2},h-x_l\right)}-1 & \text{ if }  x_l\leq h\leq  \plvsl{K}, h\in \Z\\
\ceil{\min\left(\frac{x_r-x_l}{2},x_r-h\right)}-1 & \text{ if } x_r\geq h\geq  \plvsl{K}, h\in \Z.
\end{cases}
\end{align*}
\end{coro}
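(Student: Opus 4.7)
My plan is to deduce this from Theorem \ref{LengthVerticalSegments} by translating the length lower bound into a lattice-point lower bound on a single vertical line. The one step not contained in the previous theorem is the elementary observation that for a real open interval of length $\ell \ge 0$, the number of integers it contains is at least $\ceil{\ell}-1$. This is immediate: if $\ell\in\Z$ then the minimum $\ell-1=\ceil{\ell}-1$ is realized by an interval with integer endpoints; and if $\ell\notin\Z$, then any interval of length $\ell$ contains at least $\floor{\ell}=\ceil{\ell}-1$ integers. In particular, this lower bound is monotone in $\ell$, so replacing the true length of the segment by the weaker lower bound from Theorem \ref{LengthVerticalSegments} only weakens the integer-count bound.

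Next I would make explicit the identification between the two geometric objects involved. For $h\in\Z$ with $x_l<h<x_r$, the line $\{x_1=h\}$ meets $\interior{K}$ in the relative interior of the chord $K\cap\{x_1=h\}$, which is a one-dimensional open segment of exactly the same length as the closed chord (since $K$ is convex with non-empty interior, the chord meets $\interior{K}$ iff $h$ lies strictly in $\pi_1(K)$, and then the interior is obtained by removing the two endpoints). The $x_2$-coordinates of its integer points form exactly the set of integers in an open real interval whose length equals $|K\cap\{x_1=h\}|$, so the elementary fact above applies. Combining with Theorem \ref{LengthVerticalSegments} for the two cases $x_l\le h\le\plvsl{K}$ and $\plvsl{K}\le h\le x_r$ gives the two claimed inequalities.

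Finally I would address the degenerate boundary cases $h=x_l$ and $h=x_r$: here the chord may be a single point or an entire vertical edge, and the corresponding vertical line is not intersected by $\interior{K}$ (or meets it only in a boundary segment), so there are zero interior integral points on it; on the other side, the claimed bound $\ceil{0}-1=-1$ is vacuous, so nothing needs to be proved. I do not anticipate a genuine obstacle here: the whole argument is a bookkeeping step that converts the length estimate of the previous theorem into a lattice-point count via the ceiling-minus-one rule, and the only care needed is to check that the open/closed distinction between $K\cap\{x_1=h\}$ and $\interior{K}\cap\{x_1=h\}$ does not cost any length, which it does not.
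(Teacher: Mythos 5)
Your proposal is correct and follows essentially the same route as the paper: the paper's proof is exactly the one-line reduction to Theorem \ref{LengthVerticalSegments} via the observation that a vertical segment of length greater than an integer $k\geq 1$ contains at least $k$ integral points in its relative interior. Your additional bookkeeping (the $\ceil{\ell}-1$ count for open intervals, the identification of $\interior{K}\cap\{x_1=h\}$ with the relative interior of the chord, and the vacuous boundary cases $h=x_l,x_r$) just makes explicit what the paper leaves implicit.
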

\begin{proof}
This follows directly from Theorem \ref{LengthVerticalSegments}, since a line segment
\begin{align*}
K\cap\{(x_1,x_2)\in \R^2 \mid x_1=h\}, h\in \Z
\end{align*}
with length greater than $k\in \Z_{\geq 1}$ has at least $k$ interior integral points.
\end{proof}

\section{Area bound for planar convex bodies with respect to their lattice width data}

In this section, we compute area bounds for convex bodies $K\subseteq \R^2$ with $k\geq 1$ interior integral points with respect to their lattice width data.

We use only elementary integration tools as methods. After choosing the coordinates appropriately, we approximate the area of the convex body using triangles and trapezoids to obtain our results.

The choice of coordinates was already done in \ref{LatticeWidthCoordinates}, where we essentially noticed that we can assume that $(1,0)$ is a lattice width direction of $K$. Moreover, we can assume the situations in the propositions of this section without loss of generality by \ref{LatticeWidthCoordinates}. For a suitable subdivision into triangles and trapezoids we use the vertical integral lines $x_1=h \in \Z$, supporting lines and chords of $K$.

\subsection{Planar convex bodies with one integral vertical line}

\begin{prop}\label{OneLine}
Let $(1,0)$ be a lattice width direction of $K$,  $0< a, b\leq 1$ with $\pi_1(K)=[1-a,1+b]$, $\plvsl{K}\leq 1$ and $k:=|\interior{K}\cap \Z^2|$ the number of interior integral points. Then
\begin{align*}
area(K)\leq
\begin{cases}
\frac{(a+b)^2}{2b}(k+1) & \text{ if } a> b\\
(a+b)(k+1)\leq 2(k+1) & \text{ if } a\leq b.
\end{cases}
\end{align*}
Furthermore, the bounds in the first inequality are sharp.
\end{prop}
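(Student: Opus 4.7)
The plan is to enclose $K$ in a trapezoid bounded by the vertical lines $x_1 = 1-a$ and $x_1 = 1+b$ together with two supporting lines of $K$ at the endpoints of the central chord $C := K \cap \{x_1 = 1\}$, and then to optimise over admissible slopes. Write $L := l_{K, \pi_1}(1)$ for the length of $C$, with upper endpoint $(1, y_a)$ and lower endpoint $(1, y_b)$, so $L = y_a - y_b$. Because $0 < a,b \leq 1$, the only integer in $(1-a, 1+b)$ is $1$, so every interior integral point of $K$ lies on $C$; since the relative interior of $C$ is contained in $\interior{K}$, it contains at most $k$ lattice points, which forces $L \leq k+1$.

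At each of $(1, y_a)$ and $(1, y_b)$ I take the one-sided slopes $m_a^{\pm}$ and $m_b^{\pm}$ of the concave upper and convex lower boundary functions of $K$. Set $\Delta := m_b^+ - m_a^+$ and $\Delta' := m_b^- - m_a^-$. Concavity of $l_{K,\pi_1}$ from Lemma \ref{BrunnMinkowski} together with $\plvsl{K} \leq 1$, which forces $l_{K,\pi_1}$ to be non-increasing at $1$ from both sides, yields $0 \leq \Delta' \leq \Delta$; and the condition $\pi_1(K) = [1-a, 1+b]$ demands a non-negative vertical slice at $x_1 = 1+b$, i.e.\ $L - b\Delta \geq 0$, so $\Delta \leq L/b$. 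The two supporting lines above and below provide the pointwise majorants
\begin{align*}
l_{K, \pi_1}(t) \leq L - \Delta(t-1) \text{ on } [1, 1+b], \quad l_{K, \pi_1}(t) \leq L - \Delta'(t-1) \text{ on } [1-a, 1],
\end{align*}
and integrating these gives the key inequality
\begin{align*}
\area{K} = \int_{1-a}^{1+b} l_{K, \pi_1}(t)\, dt \leq L(a+b) + \frac{a^2 \Delta' - b^2 \Delta}{2}.
\end{align*}

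It remains to maximise this bound over $0 \leq \Delta' \leq \Delta \leq L/b$. When $a > b$, taking first $\Delta' = \Delta$ makes the correction $(a^2 - b^2)\Delta/2$ positive, and then $\Delta = L/b$ is optimal, yielding $\area{K} \leq L(a+b)^2/(2b) \leq (a+b)^2(k+1)/(2b)$. When $a \leq b$ the coefficient of $\Delta$ dominates and the maximum is attained at $\Delta = \Delta' = 0$, giving $\area{K} \leq L(a+b) \leq (a+b)(k+1) \leq 2(k+1)$, where the last inequality uses $a,b \leq 1$. Sharpness in the $a > b$ case is witnessed by the triangle from the introduction, namely $\conv{(1-a,0), (1-a, (k+1)(a+b)/b), (1+b, 0)}$ (which for rational $b = 1/l$ is exactly $T_{k,l}$): its chord at $x_1 = 1$ has integer endpoints $(1,0)$ and $(1, k+1)$ realising $L = k+1$ and $\Delta = \Delta' = L/b$. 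The delicate point is the careful bookkeeping of one-sided slopes at possibly non-smooth chord endpoints, together with the edge case $\plvsl{K} = 1$ where $\Delta'$ could vanish or become negative, but in that edge case the weaker bound $L(a+b)$ already implies both claimed estimates since $(a+b) \leq (a+b)^2/(2b)$ exactly when $a \geq b$.
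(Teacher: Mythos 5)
Your argument is correct and follows essentially the same route as the paper: supporting lines of $K$ at the chord $x_1=1$, the bound $L\le k+1$ from the interior lattice points, and the identification of the extremal configurations as triangles ($a>b$) resp.\ parallelograms ($a\le b$) --- you merely make the paper's implicit maximization explicit by integrating the linear majorants of $l_{K,\pi_1}$ and optimizing over the slope parameters $\Delta,\Delta'$, which is a welcome sharpening of a step the paper only asserts. The one small omission is sharpness in the case $a\le b$, which the statement also claims: a parallelogram of vertical side length $k+1$ over $[1-a,1+b]$, slightly tapered so that $\plvsl{K}\le 1$ holds, attains $(a+b)(k+1)$ in the limit.
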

\begin{proof}
We take two supporting lines of $K$ in $x_1=1$, one from above and one from below, and intersect the area between them with the strip $1-a\leq x_1\leq 1+b$ to get a triangle or trapezoid $T_K$ containing $K$. The dual vector $(1,0)$ is also a lattice width direction of $T_K$ since $\text{width}_K((1,0))=\text{width}_{T_K}((1,0))$ and $\text{width}_K(v)\leq\text{width}_{T_K}(v)$ for all $v\in (\Z^2)^\ast \setminus \{0\}$. We also have $\pi_1(T_K)=[1-a,1+b]$, $|\interior{T_K}\cap \Z^2|=k$ and $\plvsl{T_K}\leq 1$ by construction and so it is enough to look for sharp area bounds for triangles or trapezoids $T_K$.

The length of the line segment
\begin{align*}
T_K \cap \{(x_1,x_2)\in \R^2 \mid x_1=1\}
\end{align*}
is less than or equal to $k+1$ due to the $k$ interior integral points, so the area maximizers for $T_K$ are due to $\plvsl{T_K}\leq 1$ triangles with base length $\frac{a+b}{b}(k+1)$ and height $a+b$ in the case $a>b$ or parallelograms  with base length $k+1$ and height $a+b$ for $a\leq b$. Calculating their area yields the result.
\end{proof}

\begin{rem}\label{MaximizerLatticeWidth12}
In the Proposition, we not only gave sharp bounds, but we also explicitly described the area maximizers in the proof. In particular, we saw that for $\lw{K}\leq 1$, convex bodies with k interior integral points can be found with arbitrarily large area. However, we also found, that if the planar convex body has one integral vertical line and $\lw{K}>1$, then the area is bounded by
\begin{align*}
\frac{\lw{K}^2}{2(\lw{K}-1)}(k+1).
\end{align*}	
\end{rem}

\subsection{Planar convex bodies with two integral vertical lines}

\begin{prop}\label{TwoLines}
Let $(1,0)$ be a lattice width direction of $K$,  $0< a, b\leq 1$ with $\pi_1(K)=[1-a,2+b]$ and $
k_i:=|\interior{K}\cap \{x_1=i\}\cap \Z^2| \text{ for } i\in \{1,2\}, k:=k_1+k_2>0$.\\
If $\frac{3}{2}\geq \plvsl{K}> 1$, then
\begin{align*}
area(K)\leq & \begin{cases}
\frac{(a+b+1)^2(a(k_1+1)+b(k_2+1))}{2(a+b)^2} & \text{ if } a+b<1\\
(a+1)(k_1+1)+b(1-k_1+2k_2) & \text{ if } a+b\geq 1\end{cases}\\
\leq & \begin{cases}
\frac{(a+b+1)^2}{2(a+b)}(k+1) & \text{ if } a+b<1\\
2(k+1) & \text{ if } a+b\geq 1.		
\end{cases}.
\end{align*}
If $\plvsl{K}\leq 1$, then we have $l_1\geq l_2$ for $l_i:=|K\cap \{x_1=i\}|, i\in \{1,2\}$ and
\begin{align*}
area(K) \leq&\left(1+a-\frac{b^2}{2}\right)l_1+\left(\frac{b^2}{2}+b\right)l_2 \\
\leq &\begin{cases}
\left(\frac{1}{2}+a\right)k_1+\frac{3}{2}k_2+2+a & \text{ if } l_1\leq 2l_2\\
(1+a)k_1+\frac{k_2}{2}+\frac{3}{2}+a & \text{ if } l_1> 2l_2 \text{ and } k_2>0\\
2(k_1+1)+\frac{1}{2k_1}-(1-a)(k_1-1) & \text{ if } l_1> 2l_2 \text{ and } k_2=0
\end{cases}\\
\leq &\begin{cases}
2(k+1) & \textit{ if } k_2>0\\ 
2(k+1)+\frac{1}{2k} & \textit{ if } k_2=0.
\end{cases}
\end{align*}
\end{prop}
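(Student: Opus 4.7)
Following the strategy of Proposition~\ref{OneLine}, the plan is to bound $K$ from outside by a convex polygon $P$ built from the supporting lines of $K$ at $x_1=1$ and $x_1=2$, and then to estimate $\area{P}$. Concretely, at each of $x_1=1$ and $x_1=2$ I would take supporting lines of $K$ through the top and bottom endpoints of $K\cap\{x_1=i\}$ and extend them across the whole strip $[x_l,x_r]$; the intersection of the four resulting half-planes with that strip is a convex polygon $P\supseteq K$ with the same vertical lengths $l_1,l_2$. As in Proposition~\ref{OneLine}, $(1,0)$ remains a lattice width direction of $P$, since widths in other primitive dual directions can only grow when passing from $K$ to $P$; hence it suffices to bound $\area{P}$.

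For $\tfrac{3}{2}\ge \plvsl{K}>1$ I would split $\area{P}$ into the three pieces over $[x_l,1]$, $[1,2]$ and $[2,x_r]$. The outer pieces are trapezoids (possibly degenerating to triangles) with one vertical edge of length $l_1$, resp.\ $l_2$; the middle piece is a quadrilateral or hexagon according to whether the upper (and lower) supporting lines meet inside $[1,2]$. Concavity of the vertical length function (Lemma~\ref{BrunnMinkowski}) together with $\plvsl{K}\in(1,\tfrac{3}{2}]$ constrains the four supporting slopes. Expressing $\area{P}$ in terms of $a,b,l_1,l_2$ and these slopes and then optimising over the slopes subject to the constraints yields the first displayed inequality. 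Substituting $l_i\le k_i+1$ and distinguishing $a+b<1$ from $a+b\ge 1$ then produces the two subcases, exactly as in Proposition~\ref{OneLine}, where a triangle is extremal in the first regime and a parallelogram-like shape in the second.

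For $\plvsl{K}\le 1$ the function $l$ is decreasing on $[1,x_r]$, which immediately gives $l_1\ge l_2$. Performing the analogous decomposition now yields the first bound $(1+a-\tfrac{b^2}{2})l_1+(\tfrac{b^2}{2}+b)l_2$; the $b^2/2$ correction is the triangular area lost on $[2,x_r]$ because the supporting slope there is forced by the slope connecting $(1,u_1)$ to $(2,u_2)$, and symmetrically below. The three sub-bounds correspond to different extremal configurations: when $l_1\le 2l_2$ the upper and lower supporting lines on $[1,2]$ are almost parallel so the bound is linear in the $k_i$; when $l_1>2l_2$ and $k_2>0$ a triangle-like shape with apex slightly past $x_1=2$ is extremal; and when $k_2=0$ (so $l_2\le 1$) a nearly degenerate triangle is extremal.

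The main technical obstacle will be the bookkeeping in this last sub-case, where the bound acquires the $\tfrac{1}{2k_1}$ correction: one must optimise over the residual freedom in $a$ and the supporting slopes to see that the extremal triangle with apex close to $x_1=2$ and base of length $l_1\le k_1+1$ at $x_1=1$ contributes exactly this quadratic error, while controlling the negative linear correction $-(1-a)(k_1-1)$. Once this configuration is pinned down, the final universal bounds $2(k+1)$ (for $k_2>0$) and $2(k+1)+\tfrac{1}{2k}$ (for $k_2=0$) follow by substituting $l_i\le k_i+1$ and simplifying.
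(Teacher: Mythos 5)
Your overall strategy (outer approximation by a polygon cut out by supporting lines and chords at $x_1=1,2$, invariance of the lattice width direction under this enlargement, then substituting $l_i\le k_i+1$) is the right frame, and your treatment of the case $\plvsl{K}\le 1$ is close to the actual argument. But as written the proposal has a genuine gap: every one of the three displayed chains of inequalities is asserted rather than derived, and the phrase ``optimising over the slopes subject to the constraints yields the first displayed inequality'' hides exactly the content of the proposition. The constraint set for that optimisation (four slopes subject to $(1,0)$ remaining a lattice width direction, i.e.\ to the width in every other primitive dual direction being at least $1+a+b$) is not even written down, and it is not clear that a direct slope optimisation produces the very specific bound $\frac{(a+b+1)^2\left(a(k_1+1)+b(k_2+1)\right)}{2(a+b)^2}$. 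The paper avoids this entirely: for $\frac{3}{2}\ge\plvsl{K}>1$ and $a+b<1$ it splits the strip at $x_1=1+\frac{a}{a+b}=2-\frac{b}{a+b}$ and applies Proposition~\ref{OneLine} separately to the two resulting one-line configurations, which yields the weighted bound immediately; for $a+b\ge 1$ it splits at $x_1=2-b$ (after assuming $k_1\ge 1$ without loss of generality). This reduction to the already-proved one-line case is the key idea your proposal is missing for the first half of the statement.

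The second place where the proposal defers essential work is the sub-case $l_1>2l_2$, $k_2=0$, which you correctly identify as the technical crux but do not resolve. The paper's route is concrete: the part of $K$ beyond $x_1=2$ sits inside the triangle $T$ cut off by the two chords through the extreme points of $K$ at $x_1=1$ and $x_1=2$, with $\area{T}=\frac{l_2^2}{2(l_1-l_2)}$; since $k_2=0$ forces $l_2\le 1$, one gets $\area{T}\le\frac{1}{2(l_1-1)}$, and then one either disposes of the case $l_1\le\frac{16}{9}$ directly or uses monotonicity of $x\mapsto x+\frac1x$ together with $k_1-1\le l_1\le k_1+1$ to land on $2(k_1+1)+\frac{1}{2k_1}-(1-a)(k_1-1)$. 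None of this follows from ``pinning down an extremal nearly degenerate triangle'' without the quantitative steps. Also note one small correction to your setup: on $[2,2+b]$ the paper bounds $K$ by the region between the two \emph{chords} (secants through the top, resp.\ bottom, points of $K$ at $x_1=1$ and $x_1=2$), not by supporting lines at $x_1=2$; it is convexity of $K$ that places $K\cap\{x_1\ge 2\}$ inside that chordal region, and this is what produces the parallel sides $l_2$ and $l_2-b(l_1-l_2)$ and hence the $\frac{b^2}{2}$ terms you mention.
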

\begin{proof}
In the case $\frac{3}{2}\geq \plvsl{K}>1, a+b<1$, we can use the results for one integral width line from Proposition \ref{OneLine} twice, once for the part of $K$ with $1-a\leq x_1 \leq 1+\frac{a}{a+b}$ and a second time for the part with $2-\frac{b}{a+b}\leq x_1\leq 2+b$. Since $\frac{a}{a+b}>a$, $\frac{b}{a+b}>b$ and $\frac{3}{2}\geq \plvsl{K}>1$ we get
\begin{align*}
\area{K}\overset{\ref{OneLine}}{\leq} &\frac{\left(a+\frac{a}{a+b}\right)^2}{2a}(k_1+1)+\frac{\left(b+\frac{b}{a+b}\right)^2}{2b}(k_2+1)\\
= & \frac{(a+b+1)^2(a(k_1+1)+b(k_2+1))}{2(a+b)^2}\\
\leq & \frac{(a+b+1)^2}{2(a+b)}(k+1).
\end{align*}
In the case $\frac{3}{2}\geq \plvsl{K}>1, a+b\geq 1$ we can assume without loss of generality that $k_1\geq 1$ and use Proposition \ref{OneLine} for the parts $1-a\leq x_1\leq 2-b$ and $2-b\leq x_1\leq 2+b$. Since $a\geq 1-b$ and $\plvsl{K}>1$ we get  
\begin{align*}
\area{K}\overset{\ref{OneLine}}{\leq}& (a-b+1)(k_1+1)+2b(k_2+1)\\
\leq&(a+1)(k_1+1)+b(1-k_1+2k_2)\\
\overset{(\ast)}{\leq }&(a+1)(k_1+1)+2bk_2\\
\leq &2(k+1),
\end{align*}
where we have used $k_1\geq 1$ in $(\ast)$.

In the case $\plvsl{K}<1$ we have $l_1\geq l_2$ due to the monotonically decreasing vertical length on $[\plvsl{K},2]$ by \ref{BrunnMinkowski}. We bound $K$ by the union of two trapezoids, namely a first trapezoid bounded by $1-a\leq x_1\leq 2$, one supporting line in $x_1=1$ from above and one from below, and a second trapezoid bounded by $2\leq x_1\leq 2+b$, a line through the points of $K$ on $x_1=1$ and $x_1=2$ with maximal $x_2$ coordinate and a line through the points of $K$ on $x_1=1$ and $x_1=2$ with minimal $x_2$ coordinate.

The first trapezoid has an area less than $(1+a)l_1$ because $l_1$ is longer than the midsegment of the trapezoid. The second trapezoid has parallel sides of length $l_2$ and $l_2-b(l_1-l_2)$ and height $b$, so we get 
\begin{align*}
\area{K}\leq (1+a)l_1+\frac{b}{2}\left(l_2+(l_2-b(l_1-l_2))\right)=\left(1+a-\frac{b^2}{2}\right)l_1+\left(\frac{b^2}{2}+b\right)l_2.
\end{align*}

If $l_1\leq 2l_2$ we can enlarge the second trapezoid by taking $x_1\leq 3$ instead of $x_1\leq 2+b$, i.e. use $b=1$, and with $a\leq 1$ and $l_i\leq k_i+1$ we get  
\begin{align*}
\area{K}\leq \left(\frac{1}{2}+a\right)l_1+\frac{3}{2}l_2\leq \left(\frac{1}{2}+a\right)k_1+\frac{3}{2}k_2+2+a.
\end{align*}

If $l_1>2l_2$ we can enlarge the second trapezoid to the triangle $T$ bounded by $x_1=2$ and the lines through the extrema of $K$ at $x_1=1$ and $x_1=2$. This triangle has base length  $l_2$ and height $\frac{l_2}{l_1-l_2}$, so with $l_1>2l_2$ we get 
\begin{align*}
\area{T}=\frac{l_2^2}{2(l_1-l_2)}<\frac{l_2}{2}
\end{align*}
We can now use $l_i\leq k_i+1$ to get
\begin{align*}
\area{K}\leq (1+a)l_1+\area{T}\leq (1+a)l_1+\frac{l_2}{2}\leq (1+a)k_1+\frac{k_2}{2}+\frac{3}{2}+a.
\end{align*}

In the case $l_1>2l_2, k_2=0$ we can go even further. We can assume $l_1>\frac{16}{9}$, because for $l_1\leq \frac{16}{9}$ the given bound is correct since with $k\geq 1$ we have $k_1\geq 1$ and
\begin{align*}
2l_1+\frac{l_2}{2}\leq \frac{9}{4}l_1\leq 4\leq 2(k_1+1).
\end{align*}
Thus, assume $l_1>\frac{16}{9}$. Since $k_2=0$ we also have $l_2\leq 1$ and so the triangle $T$ has an area of
\begin{align*}
\area{T}=\frac{l_2^2}{2(l_1-l_2)}\leq\frac{1}{2(l_1-1)}.
\end{align*}
The function $x\mapsto x+\frac{1}{x}$ is increasing on $[1,\infty)$. Since $l_1> \frac{16}{9}$ we have $2(l_1-1)>1$ and so $2(l_1-1)+\frac{1}{2(l_1-1)}\leq 2k_1+\frac{1}{2k_1}$. Thus, we get with $k_1-1\leq l_1\leq k_1+1$ the bound
\begin{align*}
\area{K}\leq (1+a)l_1+\frac{1}{2(l_1-1)}\leq 2(k_1+1)+\frac{1}{2k_1}-(1-a)(k_1-1).
\end{align*}

The last inequality in the proposition follows directly in all cases except in the case $k_1=0, k_2=1$. But then $1\geq l_1\geq l_2$  and we get
\begin{align*}
\area{K}\leq 2l_1+\frac{3}{2}l_2 \leq 4=2(k+1).
\end{align*}
\end{proof}

\begin{rem}\label{MaximizerLatticeWidth12_b}
If we look in the case $\plvsl{K}\leq 1$ at those convex bodies with $a+b<1$ or equivalently $b<1-a$, we get
\begin{align*}
\area{K}\leq& \left(1+a\right)l_1-\frac{b^2}{2}(l_1-l_2)+bl_2\\
\leq&\left(1+a\right)l_1+(1-a)l_2\\
\leq & 2l_1\\
\leq &2 (k+1),
\end{align*}
Together with the case $\plvsl{K}>1, a+b<1$ and \ref{MaximizerLatticeWidth12} we get that for $\lw{K}\in (1,2]$ we have a sharp area bound and can even describe area maximizers.
\end{rem}

\subsection{Planar convex bodies with at least three integral vertical lines}

\begin{prop}\label{3ormore}
Let be $m\in \Z_{\geq 3}$, $(1,0)$ a lattice width direction of $K$,  $0< a, b\leq 1$ with $\pi_1(K)=[1-a,m+b]$, $\plvsl{K}\leq \frac{m+1}{2}$, $l_i:=|K\cap \{x_1=i\}|$ and $k_i:=|\interior{K}\cap \{x_1=i\}\cap \Z^2|$. Then
\begin{align*}
\area{K}\leq \begin{cases}
2k+2-\floor{\frac{m-5}{2}}\cdot \floor{\frac{m-3}{2}} & \text{if } k_m\neq 0 \text{ or } m\geq 5\\
2k+2+\frac{1}{2} & \text{else}.
\end{cases}
\end{align*}
\end{prop}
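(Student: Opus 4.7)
My plan is to extend the trapezoidal-enclosure strategy from the proofs of Propositions \ref{OneLine} and \ref{TwoLines}. First I split $K$ along the $m$ integer vertical lines into $m+1$ strips: the two end strips $K\cap\{1-a\leq x_1\leq 1\}$ and $K\cap\{m\leq x_1\leq m+b\}$, and the $m-1$ inner strips $K\cap\{j\leq x_1\leq j+1\}$ for $1\leq j\leq m-1$. Each strip will be enclosed in a trapezoid or triangle cut out by suitable supporting lines of $K$ at the integer verticals, mirroring the construction used in the earlier propositions. Since the hypothesis $\plvsl{K}\leq \tfrac{m+1}{2}$, combined with the concavity of the slicing length function $l(x)=|K\cap\{x_1=x\}|$ from Lemma \ref{BrunnMinkowski}, forces $l$ to be monotonic on every inner strip except possibly the unique peak strip containing $\plvsl{K}$, the monotonic strips admit the clean bound $\int_j^{j+1}l\leq \max(l_j,l_{j+1})$ with $l_j:=|K\cap\{x_1=j\}|$, while the peak strip is handled ad hoc via supporting lines exactly as in the proofs of Propositions \ref{OneLine} and \ref{TwoLines}.

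Summing the strip bounds yields an estimate of the form $\area{K}\leq \sum_j c_j l_j$ with explicit small coefficients $c_j$ depending on $a$, $b$ and the position of $\plvsl{K}$. The elementary length-vs-integer-count inequality $l_j\leq k_j+1$ (which follows at once from the fact that an open interval of length $L$ contains at least $L-1$ integers) then upgrades this to a bound of the shape $\area{K}\leq 2k+O(1)$. Finally, the correction $\floor{(m-5)/2}\cdot\floor{(m-3)/2}$ is extracted by invoking Corollary \ref{IntPointsVerticalSegments}: under $\plvsl{K}\leq \tfrac{m+1}{2}$ this forces $k_j\geq j-1$ for integer $j\leq \plvsl{K}$ and $k_j\geq m-j$ for integer $j\geq \plvsl{K}$, producing a quadratic-in-$m$ excess of interior integer points in the middle columns that is not reflected in the area bound; subtracting this excess from $2(k+1)$ yields exactly the claimed savings.

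The main obstacle will be combinatorial bookkeeping: landing precisely on the floor functions $\floor{(m-5)/2}$ and $\floor{(m-3)/2}$ requires a careful case split on the parity of $m$, on whether $\plvsl{K}$ is itself an integer, and on the location of the peak strip within $[1-a,m+b]$. The borderline subcase $k_m=0$ with $m\in\{3,4\}$ is especially delicate: here the right-end chord satisfies $l_m\leq 1$ and one needs a tighter triangular estimate on the right-end strip, modeled on the $l_1>2l_2,\,k_2=0$ branch of Proposition \ref{TwoLines}; this sharper estimate is exactly what produces the $+\tfrac{1}{2}$ correction in the alternative branch of the bound.
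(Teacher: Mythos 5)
Your outline reproduces the paper's strategy almost exactly: the same decomposition into strips between consecutive integer verticals, the same use of the concavity of the slicing length from Lemma \ref{BrunnMinkowski} to bound each monotone strip by $\max(l_j,l_{j+1})$ and the peak strip by a supporting-line trapezoid, the same conversion $l_j\le k_j+1$, the same extraction of the quadratic saving from Corollary \ref{IntPointsVerticalSegments} (your bounds $k_j\ge j-1$ and $k_j\ge m-j$ are exactly what the paper sums to obtain $2\sum_{i=1}^{\floor{(m-5)/2}}i$), and the same appeal to the last branch of Proposition \ref{TwoLines} to produce the $+\frac{1}{2}$ when $k_m=0$ and $m\in\{3,4\}$.

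There is, however, one genuine gap. The chain $\area{K}\le\sum_j c_jl_j\le\sum_j c_j(k_j+1)$ spends a ``$+1$'' on every integer vertical line, and the bookkeeping only closes if every non-peak line carries at least one interior lattice point. Corollary \ref{IntPointsVerticalSegments} is vacuous for the two extreme lines $j=1$ and $j=m$, and nothing in the hypotheses prevents $k_1=0$ (small $a$) or, within the first branch when $m\ge 5$, $k_m=0$. In those cases your estimate overshoots the target $2k+2-\floor{\frac{m-5}{2}}\cdot\floor{\frac{m-3}{2}}$ by up to $2$, and the quadratic excess from the middle columns cannot be cannibalized to repair this without destroying the claimed correction term. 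A separate geometric argument for an empty end line is required: since $k_1=0$ forces $l_1\le 1$, the trapezoid between two supporting lines at $x_1=1$ over $1-a\le x_1\le 1+a$ has area $2al_1\le 2a$, and combining this with $l_2\le k_2+1$ and $k_2\ge 1$ one gets $\area{K\cap\{(x_1,x_2)\mid 1-a\le x_1\le 2\}}\le k_2+1$ (respectively $\le 2(k_2+1)$ for the strip up to $x_1=3$ when the peak lies in $(1,2]$), so the left end is paid for entirely by the second column and the term $k_1+1$ never enters. The symmetric argument handles $k_m=0$ for $m\ge 5$; only for $m\in\{3,4\}$ does it collide with the peak strip, which is exactly where the $+\frac{1}{2}$ from Proposition \ref{TwoLines} becomes unavoidable. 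Without this step the first branch of the proposition is not reached whenever an end column is empty.
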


\begin{proof}
Let be $i_{\max}:=\lceil \plvsl{K}\rceil$. Then $1\leq i_{\max}\leq \lceil \frac{m+1}{2}\rceil<m$ and by definition of $\plvsl{K}$ for $i\in \Z$ we have
\begin{align*}
\area{K\cap \{(x_1,x_2)\in \R^2 \mid i\leq x_1\leq i+1\}}\leq \begin{cases}
l_{i+1} & \textit{if } i+1< i_{\max}\\
l_{i} & \textit{if } i\geq i_{\max}
\end{cases}
\end{align*}
and with the trapezoid defined by $i_{\max}-1\leq x_1\leq i_{\max}+1$ and supporting lines in $x_1=i_{\max}$ we get
\begin{align*}
\area{K\cap \{(x_1,x_2)\in \R^2 \mid i_{\max}-1\leq x_1\leq i_{\max}+1\}}\leq 2l_{i_{\max}}.
\end{align*}
If $k_1k_m\neq 0$, then we get from these bounds with $l_i\leq k_i+1$
\begin{align*}
\area{K}\leq & 2l_{i_{\max}}+\sum_{i=1, i\neq {i_{\max}}}^{m}l_i\\
\leq&2k_{i_{\max}}+2+\sum_{i=1, i\neq {i_{\max}}}^{m}(k_i+1)\\
\leq &k+k_{i_{\max}}+m+1\\
\overset{(\ast)}{\leq} &2k+2- 2\cdot \sum_{i=1}^{\floor{(m-5)/2}}i\\
\leq &2k+2- \floor{\frac{m-5}{2}}\cdot \floor{\frac{m-3}{2}},
\end{align*}
where we used in $(\ast)$ that we can count the interior integral points of $K$ by counting the points of $\interior{K}$ on $x_1=i_{\max}$, count at least one point on the other $m-1$ integral vertical lines (remember $k_1k_m\neq 0$ !) and also some more points on these lines according to \ref{IntPointsVerticalSegments} (at least one point more for each line till $\plvsl{K}$). So we get
\begin{align*}
k_{i_{\max}}+(m-1)+2\cdot \sum_{i=1}^{\floor{(m-5)/2}}i\leq k.
\end{align*}

For the above calculation, we need $k_1k_m\neq 0$ to have at least one point on each integral width line. But we will now see that everything still works fine if we only have $k_m\neq 0, k_1=0$, because we can avoid the $k_1+1$ term of 
\begin{align*}
\area{K\cap \{(x_1,x_2)\in \R^2 \mid 1-a\leq x_1\leq 1\}}
\end{align*}
with the following argument divided in cases with respect to $\plvsl{K}$.

If $\plvsl{K}\in (1,2]$, then we have $i_{\max}=2$ and with $k_2\geq 1$ by \ref{IntPointsVerticalSegments} we get
\begin{align*}
&\area{K\cap \{(x_1,x_2)\in \R^2 \mid 1-a\leq x_1\leq 3\}}\\
\leq& \area{K\cap \{(x_1,x_2)\in \R^2 \mid 1-a\leq x_1\leq 1+a\}}+\\
 &\qquad\area{K\cap \{(x_1,x_2)\in \R^2 \mid 1+a\leq x_1\leq 3\}}\\
\leq & 2a + (2-a)l_2 \leq 2a+(2-a)(k_2+1)\leq 2(k_{i_{\max}}+1)
\end{align*}
and if $\plvsl{K}>2$ we get with $k_2\geq 1$
\begin{align*}
&\area{K\cap \{(x_1,x_2)\in \R^2 \mid 1-a\leq x_1\leq 2\}}\\
\leq& \area{K\cap \{(x_1,x_2)\in \R^2 \mid 1-a\leq x_1\leq 1+a\}}+\\
 &\qquad\area{K\cap \{(x_1,x_2)\in \R^2 \mid 1+a\leq x_1\leq 2\}}\\
\leq & 2a + (1-a)l_2 \leq 2a+(1-a)(k_2+1) \leq k_2+1
\end{align*}
and the case $\plvsl{K}\leq 1$ is not possible, because of $k_1=0$ and \ref{IntPointsVerticalSegments}.\\

We can use similar arguments as in the case $\plvsl{K}>2$ above to avoid the $k_m+1$ term in the case $k_m=0$ when $m\geq 5$, because then we get
\begin{align*}
\area{K\cap \{(x_1,x_2)\in \R^2 \mid m-1\leq x_1\leq m+b\}}\leq k_{m-1}+1.
\end{align*}
But if $m<5$, we cannot use this argument, because then $i_{\max}-1\leq x_1 \leq i_{\max}+1$ and $m-1\leq x_1$ can have non-empty intersection. So if $k_m=0$ and $m<5$, then in the cases $m=3, i_{\max}=2$ or $m=4, i_{\max}=3$ we have to use the last inequality of \ref{TwoLines} to get
\begin{align*}
&\area{K\cap \{(x_1,x_2)\in \R^2 \mid m-2\leq x_1\leq m+b\}}\leq 2k_{i_{\max}}+2+\frac{1}{2},
\end{align*}
i.e. we get an additional $+\frac{1}{2}$ compared to the calculations above.
\end{proof}

\begin{proof}[Proof of Theorem \ref{TheoremSmallWidth}] Combining the Propositions \ref{OneLine}, \ref{TwoLines}, \ref{3ormore} and the Remarks \ref{MaximizerLatticeWidth12}, \ref{MaximizerLatticeWidth12_b} we get the result.
\end{proof}

\subsection{Consequences for area bounds of some specific planar convex bodies}

\begin{coro}\label{SymmetricLWD}
Let $K\subseteq \R^2$ be a convex body with $\lw{K}>5$ or with symmetric lattice width data, i.e., there is an odd number $n$, $0<a\leq 1$ and a lattice width direction $w$ with $\lwd{w}{K}=([1-a,n+a], \frac{n+1}{2})$. Then $\area{K}\leq 2(k+1)$.
\end{coro}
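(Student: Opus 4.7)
The plan is to apply the propositions of this section after reducing to the standard coordinates from Lemma \ref{LatticeWidthCoordinates}: we may assume $(1,0)$ is a lattice width direction with $\pi_1(K) = [1-a, m+b]$, $0 < a, b \leq 1$, and $\plvsl{K} \leq (m+1)/2$.

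For the case $\lw{K} > 5$: since $\lw{K} = m + a + b - 1$ and $a, b \leq 1$, we must have $m \geq 5$, and Proposition \ref{3ormore} directly gives $\area{K} \leq 2(k+1) - \lfloor (m-5)/2 \rfloor \lfloor (m-3)/2 \rfloor \leq 2(k+1)$.

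For the symmetric-lattice-width-data case we have $b = a$, $m = n$ odd, and $\plvsl{K} = (n+1)/2$. The case $n=1$ follows from Proposition \ref{OneLine} applied with $a = b$, giving $\area{K} \leq 2a(k+1) \leq 2(k+1)$. For $n \geq 5$ odd, Proposition \ref{3ormore} with $m = n \geq 5$ applies as in the previous case. For $n = 3$, Proposition \ref{3ormore} already yields $\area{K} \leq 2(k+1)$ whenever $k_3 \neq 0$; if instead $k_3 = 0$ but $k_1 \neq 0$, one first applies the unimodular reflection $(x_1, x_2) \mapsto (4 - x_1, x_2)$, which preserves the symmetric lattice width data and swaps $k_1$ and $k_3$, reducing to the previous sub-case.

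The main obstacle is the sub-case $n = 3$ with $k_1 = k_3 = 0$, where Proposition \ref{3ormore} only yields $\area{K} \leq 2(k+1) + 1/2$. Here I would argue directly from the concavity of the slicing length $l$ (Lemma \ref{BrunnMinkowski}) and its monotonicity on $[1-a, 2]$ and $[2, 3+a]$: the hypothesis gives $l(1), l(3) \leq 1$ and $l(2) \leq k+1$, while concavity on $[1-a, 2]$ with $l(1-a) \geq 0$ forces the bound $l(2) \leq 1 + (l(1) - l(1-a))/a \leq (1+a)/a$ once $l(1) = 1$. A short extremal-$l$ optimization splitting into the cases $a \leq 1/k$ (where the extremal $l$ attains $l(2) = k+1$ and a direct integration gives $\area{K} \leq 2(k+1) + a(1-k^2)$) and $a > 1/k$ (where $l$ is forced to be linear on each of $[1-a, 2]$ and $[2, 3+a]$, yielding $\area{K} \leq (1+a)^2/a$) then produces $\area{K} \leq 2(k+1)$ in both cases for all $k \geq 1$ and $a \in (0, 1]$.
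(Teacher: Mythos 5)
Your proof is correct, and for the easy cases ($\lw{K}>5$; symmetric data with $n=1$ or $n\geq 5$; and $n=3$ with $k_3\neq 0$, possibly after reflection) it matches what the paper does, namely quoting Propositions \ref{OneLine} and \ref{3ormore}. The genuine divergence is in the one case where those propositions only give $2(k+1)+\tfrac12$, namely $n=3$. The paper handles all of $n=3$ uniformly: since $\plvsl{K}=2$, the longest vertical segment sits at $x_1=2$ and is an affine diameter (Remark \ref{AffineDiameter}), so $K\cap\{1\leq x_1\leq 3\}$ lies in a parallelogram of area $2l_2$ cut out by the two parallel supporting lines, and combining this with the end-strip estimates from the proof of \ref{3ormore} removes the extra $\tfrac12$ without any case distinction on $k_1,k_3$. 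You instead isolate the only genuinely problematic sub-case $k_1=k_3=0$ and run an extremal problem over concave slice-length functions $l$ with $l(1),l(3)\leq 1$, $l(2)\leq k+1$. I checked your two claimed outputs: for $a\leq 1/k$ the extremizer is the plateau-plus-ramp profile with $l(2)=k+1$, $l(1)=l(3)=1$, $l(1-a)=l(3+a)=0$, giving $\area{K}\leq 2(k+1)+a(1-k^2)\leq 2(k+1)$, and for $a>1/k$ the concavity constraint $l(2)\leq\frac{(1+a)l(1)}{a}$ binds, the extremal $l$ is linear on each half, and $\area{K}\leq\frac{(1+a)^2}{a}\leq k+2+\frac1k\leq 2(k+1)$; both are right, and for $k=1$ the bound $2(k+1)$ is attained in the limit, so there is no slack to waste. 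The one point that deserves to be written out if you expand the sketch is that the optimization must be done jointly on $[1-a,2]$ (resp.\ $[2,3+a]$): bounding $\int_{1-a}^{1}l$ by $a\,l(1)$ and $\int_{1}^{2}l$ by the concave envelope independently loses exactly the $a/2$ per side that you need, and it is the coupling of the slope at $x_1=1$ between the two pieces (via $l(1-a)\geq 0$) that saves the argument. Your route costs more computation than the paper's one-line parallelogram observation, but it is self-contained, identifies the extremal profiles explicitly, and shows the bound $2(k+1)$ is asymptotically attained in this configuration, which the paper's argument does not make visible.
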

\begin{proof}
If $\lw{K}>5$, then we have at least $5$ integral width lines and the result follows with \ref{3ormore}.

For symmetric lattice width data, we also have an odd number of integral width lines. Therefore, the claim follows with \ref{OneLine} for one integral vertical line and with \ref{3ormore} for $5$ and more integral vertical lines. For three integral vertical lines and symmetric lattice width data we get $\plvsl{K}=2$, so we can bound $K\cap \{1\leq x_1\leq 3\}$ not only by a trapezoid but even by a parallelogram due to \ref{AffineDiameter}. Thus, we can avoid an additional $+\frac{1}{2}$ similar to the proof of \ref{3ormore}.
\end{proof}

\begin{defi}
Let $P\subseteq \R^2$ be a polygon, whose vertices have rational coordinates. Then we call $P$ a \textit{rational polygon} and the \textit{denominator of $P$} is the smallest $l\in \Z_{\geq 1}$, so that $lP$ has integral vertices.
\end{defi}

\begin{coro}\label{BoundRationalPolygons}
Let $P\subseteq \R^2$ be a rational polygon with denominator $l\in \Z_{\geq 2}$ and $k>1$ interior integral points. Then we have the sharp bound
\begin{align*}
\area{P}\leq \area{T_{k,l}}=\frac{(l+1)^2}{2l} (k+1).
\end{align*}
\end{coro}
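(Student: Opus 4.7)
The starting observation is that a rational polygon $P$ with denominator $l$ has all vertices in $\frac{1}{l}\Z^2$, so for every primitive $v\in (\Z^2)^\ast\setminus\{0\}$ the width $\text{width}_P(v)$ lies in $\frac{1}{l}\Z_{\geq 1}$. Consequently $\lw{P}$ is a positive multiple of $1/l$; in particular, if $\lw{P}>1$ then $\lw{P}\geq (l+1)/l$. Since the upper bound $\frac{\lw{K}^2}{2(\lw{K}-1)}(k+1)$ from Theorem \ref{TheoremSmallWidth} is strictly decreasing on $(1,2]$, evaluating it at the worst admissible value $(l+1)/l$ yields exactly $\frac{(l+1)^2}{2l}(k+1)=\area{T_{k,l}}$. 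This quantisation phenomenon is the heuristic driving the whole proof.

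The plan is therefore a short case analysis on $\lw{P}$, each case reducing to results already proved. If $\lw{P}\leq 1$, bringing $P$ into the coordinates of Lemma \ref{LatticeWidthCoordinates} puts us in the setting of Proposition \ref{OneLine} with $\pi_1(P)=[1-a,1+b]$ and $a,b\in\frac{1}{l}\Z_{>0}$, since the denominator survives the unimodular transformation; the existence of an interior integral point then forces $b\geq 1/l$, so $\area{P}\leq \frac{l}{2}(k+1)\leq \frac{(l+1)^2}{2l}(k+1)$. If $\lw{P}\in(1,2]$, the monotonicity argument above delivers the bound directly from Theorem \ref{TheoremSmallWidth}. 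If $\lw{P}\in(2,5]$, Theorem \ref{TheoremSmallWidth} gives only $2(k+1)+\tfrac{1}{2}$, which I would compare to the target via
\[
\frac{(l+1)^2}{2l}(k+1)-2(k+1)=\frac{(l-1)^2}{2l}(k+1)\geq \frac{1}{2},
\]
an inequality valid for $k\geq 2$ and $l\geq 2$ (the tightest case $k=l=2$ gives $3/4$). Finally, if $\lw{P}>5$, Theorem \ref{TheoremBigWidth} yields $\area{P}\leq 2(k+1)$, and $2\leq (l+1)^2/(2l)$ reduces to $(l-1)^2\geq 0$.

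Sharpness is witnessed by the triangle $T_{k,l}$ recalled in the introduction: the vertex $((l+1)/l,0)$ with $\gcd(l+1,l)=1$ makes the denominator equal to $l$, computing widths on the three vertices gives $\lw{T_{k,l}}=(l+1)/l$, a direct count shows the interior integer points are exactly $(1,1),\dots,(1,k)$, and the area formula $\frac{1}{2}(1+1/l)(l+1)(k+1)$ rearranges to $\frac{(l+1)^2}{2l}(k+1)$. The main obstacle here is conceptual rather than technical: one must notice that the denominator of $P$ is visible to the lattice-width machinery only through the quantisation $\lw{P}\in\frac{1}{l}\Z_{\geq 1}$, and that this single piece of information, combined with the monotonicity of $w\mapsto\frac{w^2}{2(w-1)}$ past $w=1$, is precisely what Theorems \ref{TheoremSmallWidth} and \ref{TheoremBigWidth} are engineered to consume.
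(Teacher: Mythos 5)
Your proposal is correct and follows essentially the same route as the paper, whose proof is the one-line remark that the bound is the one from Proposition \ref{OneLine} (with the denominator forcing $b\geq 1/l$, equivalently $\lw{P}\geq\frac{l+1}{l}$ when $\lw{P}>1$) and that all other proved bounds are smaller; your case analysis over $\lw{P}\leq 1$, $(1,2]$, $(2,5]$ and $>5$, using the monotonicity of $w\mapsto\frac{w^2}{2(w-1)}$ and the hypothesis $k>1$ to absorb the $+\frac{1}{2}$, just makes that comparison explicit. The quantisation $\lw{P}\in\frac{1}{l}\Z_{\geq 1}$ is exactly the implicit mechanism in the paper's appeal to \ref{OneLine}.
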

\begin{proof}
This is the bound from \ref{OneLine}, and all the other bounds that we have proven are smaller.
\end{proof}

\begin{rem}
In particular, Corollary \ref{BoundRationalPolygons} shows that the triangles $T_{k,l}$ from the introduction and already described in \cite[2.6.]{LZ91} are examples of area-maximizing rational polygons with $k$ interior integral points and denominator $l$. The other area maximizers for $l>2$ can be constructed from $T_{k,l}$ by a shearing in the direction of the $x_2$ axis. For $l=2, k=1$ there could be even more maximizers, since the bound in \ref{BoundRationalPolygons} is in this case $\frac{9}{2}=2(k+1)+\frac{1}{2}$.

Note also that \ref{BoundRationalPolygons} is incorrect for $l=1, k=1$, because for the standard triangle $\Delta_2:= \conv{(0,0),(1,0),(0,1)}$ we have $\area{3\cdot \Delta_2}=\frac{9}{2}>4$. 
\end{rem}

\begin{rem}
Rational polygons with one interior integral point are also important in toric geometry. For IP-polygons, i.e., lattice polygons with 0 as an interior point, and LDP-polygons, i.e., IP-polygons with primitive lattice points as vertices, we have the \textit{order} $o_p$ of such a polygon $P$ as
\begin{align*}
o_P:=\min \{k\in \Z_{\geq 1} \mid \interior{P/k}\cap \Z^2= \{(0,0)\}\}.
\end{align*}
In \cite[4.6.]{KKN10} there is the open question whether there is a bound on the area of IP-polygons that is cubic in $o_P$. We get the positive answer from \ref{BoundRationalPolygons}, that the sharp bound is given by
\begin{align*}
\area{P}\leq o_P(o_P+1)^2.
\end{align*}
This also implies better bounds relative to the maximal local index of the polygon than the bounds given in \cite[4.5.]{KKN10}.
\end{rem}

\section{Area bounds for lattice polygons}

The methods, we used for planar convex bodies, work even better for lattice polygons. In particular the following lemma leads directly to the classification of hollow lattice polygons (e.g. done in  \cite[4.1.2]{Koe91}) or to a direct simple proof of Scott's area bound, which also gives an interesting proof of Pick's theorem en passant.

On the other hand we can use results from algebraic geometry via the language of toric geometry to obtain area bounds for lattice polygons. This approach provides an estimate for the error term in Scott's area bound on this way, implying the correct asymptotic behavior in Theorem \ref{TheoremBigWidth}.

\begin{defi}
The \textit{standard lattice triangle} is given as
\begin{align*}
\Delta_2:=\conv{(0,0),(1,0),(0,1)}.
\end{align*}
\end{defi}

\begin{rem}
Lattice polygons that are affine unimodular equivalent to integer multiples of $\Delta_2$ are often special cases in the results of this section. This starts already with the following lemma.
\end{rem}

\begin{lemma}\label{numpointslp}
Let $P\subseteq \R^2$ be a lattice polygon with lattice width direction $(1,0)$ and lattice width $\lw{P}\geq 2$ which is not affine unimodular equivalent to an integer multiple of the standard lattice triangle $\Delta_2$. Then every vertical integral line $x_1=i\in \Z$, which has non-empty intersection with $\interior{P}$, contains at least one interior lattice point of $P$.
\end{lemma}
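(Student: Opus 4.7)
The plan is to argue by contradiction: suppose some integer $i$ with $x_l < i < x_r$ (where $\pi_1(P) = [x_l, x_r]$ and $m := x_r - x_l = \lw{P} \geq 2$) yields a slice $P \cap \{x_1 = i\}$ containing no interior lattice point of $P$. Because $x_l < i < x_r$ and $P$ is convex, any lattice point in the relative interior of the slice lies automatically in the interior of $P$. Lemma \ref{LengthVerticalSegments} gives $l_i \geq \min(m/2,\, \min(i-x_l,\, x_r-i)) \geq 1$, since $m \geq 2$ and $i - x_l,\, x_r - i \geq 1$. A vertical segment of length strictly greater than $1$, or of length exactly $1$ with a non-integer endpoint, contains an integer in its relative interior; hence the hypothesis forces $l_i = 1$ exactly, with integer endpoints that we normalize to $(i, 0)$ and $(i, 1)$. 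The same bound also forces $\min(i - x_l,\, x_r - i) \leq 1$, so $i \in \{x_l + 1,\, x_r - 1\}$; after the reflection $x_1 \mapsto x_l + x_r - x_1$ and a translation, we may take $x_l = 0$ and $i = m - 1$.

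Next, $l_{m-1} = 1 = x_r - (m - 1)$ is precisely the equality case of the bound in Lemma \ref{LengthVerticalSegments}. I would revisit the proof of that lemma to extract the equality structure, which forces the following triangular configuration on $P$: on the right, $P$ is bounded by the two supporting lines of $P$ through $(m-1, 0)$ and $(m-1, 1)$ that meet in a common apex $(m, y^\ast)$ on the line $x_1 = m$; on the left, $P$ is bounded by the vertical segment from $(0, y_l)$ to $(0, y_u)$ with $y_u - y_l = m$. Imposing that these supporting lines pass through $(m-1, 0)$ and $(m-1, 1)$ gives $y_l = -(m-1) y^\ast$ and $y_u = m - (m-1) y^\ast$ for an integer $y^\ast$ determined by $P$. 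Hence $P = \conv{(0, y_l), (0, y_u), (m, y^\ast)}$, and the composition of the translation $y \mapsto y + (m-1) y^\ast$ with the unimodular integer shear $y \mapsto y - y^\ast x$ sends this triangle to $\conv{(0, 0), (0, m), (m, 0)} = m \Delta_2$, contradicting the hypothesis that $P$ is not equivalent to an integer multiple of $\Delta_2$.

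The main obstacle is extracting the precise triangular equality structure from the proof of Lemma \ref{LengthVerticalSegments}: one must identify which inequalities in that proof become tight when $l_{m-1} = x_r - (m-1)$ and translate these into explicit geometric constraints on $P$, namely that the upper and lower supporting lines at $(m-1, 1)$ and $(m-1, 0)$ must meet exactly on $x_1 = m$ and that $P$ fills the full strip between them at $x_1 = 0$. Once the apex-plus-vertical-opposite-edge configuration is in place, the identification with $m\Delta_2$ reduces to the short unimodular computation above.
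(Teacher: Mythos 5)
Your reduction is sound and follows the same line as the paper: via Lemma \ref{LengthVerticalSegments} you correctly force the offending slice to have length exactly $1$ with lattice endpoints and to sit at an extreme interior line $i\in\{x_l+1,x_r-1\}$, and the configuration you name at the end (a lattice apex on $x_1=m$, a vertical edge of length $m$ on $x_1=0$, hence $P\cong m\Delta_2$ after an integral shear) is indeed the correct one. The gap is exactly the step you flag as the ``main obstacle'': you assert that $l_{m-1}=1$ is ``precisely the equality case'' of Lemma \ref{LengthVerticalSegments} and that the triangular structure can be read off from that lemma's proof. It cannot, at least not directly: that lemma is proved by contradiction from the strict inequality $l_{h_0}<h_0-x_l$, so it only establishes the non-strict bound and never runs in the equality case; ``identifying which inequalities become tight'' there is not a proof. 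Moreover, several facts you fold into the asserted ``equality structure'' are precisely what must be shown: that the two supporting lines at $(m-1,0)$ and $(m-1,1)$ meet exactly on $x_1=m$ (equivalently that $P\cap\{x_1=m\}$ is a single point), that this point is a lattice point, and that $P$ fills the whole triangle down to $x_1=0$. As written, $y^\ast$ has no reason to be an integer and the formulas for $y_l,y_u$ presuppose that the two edge lines of $P$ at the apex extend all the way to $x_1=0$.

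These claims are true but need a different argument. For $m\geq 3$, concavity of the slice-length function (Lemma \ref{BrunnMinkowski}) together with $l_{m/2}\geq m/2>1=l_{m-1}$ from Lemma \ref{LengthVerticalSegments} forces, by extrapolating the chord through $(m/2,m/2)$ and $(m-1,1)$, that $l_m=0$; hence $P\cap\{x_1=m\}$ is a single point, necessarily a vertex of the lattice polygon and therefore a lattice point $(m,y^\ast)$. Since every vertex of $P$ has integral first coordinate, the two edges of $P$ at this vertex cross $x_1=m-1$ exactly in $(m-1,0)$ and $(m-1,1)$; after the shear $x_2\mapsto x_2-y^\ast x_1$ and a translation, the apex is $(m,0)$ and these edges lie on the lines $x_2=0$ and $x_1+x_2=m$, so $P\subseteq\{x_1\geq 0,\ x_2\geq 0,\ x_1+x_2\leq m\}=\conv{(0,0),(m,0),(0,m)}=m\Delta_2$. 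Finally, since $(1,0)$ is a lattice width direction, $\text{width}_P((0,1))\geq m$ and $\text{width}_P((1,1))\geq m$, while both functionals have range exactly $[0,m]$ on $m\Delta_2$; this forces $(0,m)\in P$ and $(0,0)\in P$, hence $P=m\Delta_2$, the desired contradiction. The case $m=2$ must be treated separately (the chord argument degenerates): there one first rules out $l_0=l_2=1$, which would make $P$ a unit-height lattice parallelogram of width $1$ in the direction of its long edges, and then runs the same containment-plus-width argument. Filling in this chain — or an equivalent one — is what your proposal still owes.
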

\begin{proof}
By \ref{IntPointsVerticalSegments} we know, that the intersection of a vertical integral line with the interior of the lattice polygon $P$ is either empty or has a length greater than or equal to $1$, where the length $1$ is only possible for those vertical integral lines with minimal or maximal $x_1$ coordinate among them with non-empty intersection. But if, without loss of generality, the integral vertical line with minimal $x_1$ coordinate among them with non-empty intersection has length $1$ and contains no interior lattice point of $P$, then we have after a translation and a shearing in the direction of the $x_2$-axis (both of them do not change the lattice width direction $(1,0)$) the points $(0,0), (1,0)$ as boundary points at the bottom and $(0,1)$ as boundary point at the top of the lattice polygon. Therefore, we get an integer multiple of $\Delta_2$ because $(1,0)$ is a lattice width direction, which contradicts the assumptions.
\end{proof}

\begin{coro}
Let $P\subseteq \R^2$ be a lattice polygon.\\
Then $|\interior{P}\cap \Z^2|=0$ if and only if $P$ has $\lw{P}=1$ or $P\cong 2\cdot\Delta_2$.
\end{coro}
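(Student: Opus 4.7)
The plan is to prove the two directions separately. For the easier implication ($\Leftarrow$), I would first handle $\lw{P}=1$: the minimum in the definition of lattice width is attained at a primitive $w\in(\Z^2)^\ast\setminus\{0\}$, and since $P$ is a lattice polygon it contains some lattice point, so the image $w(P)\subseteq \R$ is a closed interval of length $1$ containing an integer; hence $w(P)=[m,m+1]$ with $m\in\Z$. Any interior integral point of $P$ would then have $w$-value strictly between $m$ and $m+1$, but $w(x)\in\Z$ for $x\in\Z^2$, so this is impossible. For $P\cong 2\cdot\Delta_2=\conv{(0,0),(2,0),(0,2)}$ a direct check suffices: the only integer point $(i,j)$ with $i,j\geq 1$ and $i+j<2$ does not exist, since the candidate $(1,1)$ lies on the edge $x_1+x_2=2$.

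For the reverse implication ($\Rightarrow$), I would assume $|\interior{P}\cap\Z^2|=0$ and $\lw{P}\neq 1$, hence $\lw{P}\geq 2$ (as the lattice width of a lattice polygon is a positive integer; alternatively, simply observe $\lw{P}\geq 1$, and we are given the case $\lw{P}=1$ is excluded so one can assume $\lw{P}\geq 2$ on structural grounds and, if necessary, verify at the end that no lattice polygon realizes a non-integer width less than $2$). By Lemma \ref{LatticeWidthCoordinates}, after applying an affine unimodular transformation I may assume $(1,0)$ is a lattice width direction and $\pi_1(P)=[x_l,x_r]$ with $0\leq x_l<1$. Then $x_r=x_l+\lw{P}\geq 2>1$, so the vertical integral line $x_1=1$ meets $\interior{P}$ nontrivially. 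Now I invoke Lemma \ref{numpointslp}: if $P$ were not affine unimodular equivalent to an integer multiple of $\Delta_2$, that line would carry at least one interior lattice point of $P$, contradicting the hypothesis. Therefore $P\cong c\cdot\Delta_2$ for some $c\in\Z_{\geq 1}$.

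To finish, I would compute directly with $c\cdot\Delta_2$. Its interior integral points are $\{(i,j)\in\Z^2 : i\geq 1,\, j\geq 1,\, i+j\leq c-1\}$, so the hollow case forces $c\leq 2$, while $\lw{c\cdot\Delta_2}=c$ together with $\lw{P}\geq 2$ forces $c\geq 2$. Hence $c=2$ and $P\cong 2\cdot\Delta_2$, as required.

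The only real obstacle is organizing the hypotheses so that Lemma \ref{numpointslp} applies cleanly; once the normalization $0\leq x_l<1$ is in place and the lower bound $\lw{P}\geq 2$ is used, the claim that $x_1=1$ meets $\interior{P}$ is automatic and the argument reduces to cited lemmas plus one small computation on $c\cdot\Delta_2$.
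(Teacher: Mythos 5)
Your proof is correct and takes essentially the same route as the paper: Lemma \ref{numpointslp} reduces the forward direction to integer multiples of $\Delta_2$ (after normalizing so that the line $x_1=1$ meets the interior), which you then check directly, and the converse is the routine verification the paper leaves implicit. One small repair in the $\lw{P}=1$ case: $w(P)=[m,m+1]$ with $m\in\Z$ because the endpoints are $w$-values of vertices, hence integers --- a length-$1$ interval merely containing an integer need not have integer endpoints.
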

\begin{proof}
From Lemma \ref{numpointslp} we have that $P$ has $\lw{P}=1$ or $P$ is unimodular equivalent to an integer multiple of $\Delta_2$. All $m\cdot \Delta_2$ with $2\neq m\in  \Z_{\geq 1}$ have interior lattice points or lattice width $1$, so we get $m=2$, if $\lw{P}\neq 1$.
\end{proof}

\begin{thm}[\cite{Sco76}]\label{Scott} Let $P\subseteq \R^2$ be a lattice polygon with $k\geq 1$ interior integral points. Then
\begin{align*}
\area{P}	\leq 2(k+1)+\frac{1}{2}
\end{align*}
and equality holds if and only if $P \cong 3\cdot \Delta_2$.
\end{thm}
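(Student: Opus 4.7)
The plan is to reduce, via Lemma~\ref{LatticeWidthCoordinates}, to coordinates in which $(1,0)$ is a lattice width direction of $P$, so that $\pi_1(P)=[0,m]$ with $m=\lw{P}$; since $k\geq 1$, the preceding corollary forces $m\geq 2$. The argument then splits into two cases according to whether $P\cong m\Delta_2$. If $P\cong m\Delta_2$, which requires $m\geq 3$ in the presence of an interior integral point, a direct computation gives $\area{P}=m^2/2$ and $k=(m-1)(m-2)/2$, and Scott's inequality reduces to $(m-3)^2\geq 0$, giving equality exactly at $m=3$, i.e., $P\cong 3\Delta_2$.

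In the remaining case Lemma~\ref{numpointslp} applies and yields $k_i:=|\interior{P}\cap\{x_1=i\}\cap\Z^2|\geq 1$ for every $i\in\{1,\ldots,m-1\}$, so in particular $k\geq m-1$. The decisive structural fact is that the vertices of $P$ are lattice points, hence the vertical slice length function $l$ has breakpoints only at integer columns and is linear on each strip $[i,i+1]$; this yields the exact decomposition
\begin{align*}
\area{P}=\sum_{i=0}^{m-1}\frac{l_i+l_{i+1}}{2}=\frac{l_0+l_m}{2}+\sum_{i=1}^{m-1}l_i.
\end{align*}
By the concavity of $l$ (Lemma~\ref{BrunnMinkowski}) we have $l_i\geq l_0(m-i)/m+l_m\,i/m$, which summed gives $\sum_{i=1}^{m-1}l_i\geq (m-1)(l_0+l_m)/2$. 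Combined with the column-length bound $l_i\leq k_i+1$ from Corollary~\ref{IntPointsVerticalSegments} (summed to $\sum_{i=1}^{m-1}l_i\leq k+(m-1)$), this forces $l_0+l_m\leq 2k/(m-1)+2$. Substituting back into the area identity gives $\area{P}\leq km/(m-1)+m$, and the bound $k\geq m-1$ sharpens this to $\area{P}\leq 2(k+1)<2(k+1)+1/2$, strictly; so equality in Scott's bound occurs only in the first case at $m=3$.

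The main obstacle will be the careful interplay of the concavity bound with the column-length bound $l_i\leq k_i+1$: either one alone is too weak to control $l_0+l_m$ for small lattice widths, so they must be played off against each other exactly as above. As a bonus, the same column-by-column decomposition yields Pick's formula en passant: counting boundary lattice points by column (the extremal columns contributing $l_0+1$ and $l_m+1$, the interior columns at most two each) and combining with the area identity recovers $\area{P}=k+B/2-1$.
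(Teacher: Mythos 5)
Your proof is correct and follows the paper's own proof quite closely: the same reduction to lattice width coordinates, the same exact column decomposition $\area{P}=\tfrac{l_0+l_m}{2}+\sum_{i=1}^{m-1}l_i$ (the paper phrases it as a triangulation into non-lattice triangles, you as the trapezoid rule being exact for the piecewise linear slice function), the same bound $l_i\leq k_i+1$, the same appeal to Lemma \ref{numpointslp} to get $k_i\geq 1$, and the same separate computation for integer multiples of $\Delta_2$, which is where the $+\tfrac12$ and the equality case live. The one place you genuinely deviate is in controlling the end slices: the paper bounds $\tfrac{l_0+l_m}{2}$ by the single longest slice, $\tfrac{l_0+l_m}{2}\leq l_{i_{\max}}\leq k_{i_{\max}}+1$, and then counts columns, whereas you average the concavity inequality over all interior columns to get $\tfrac{l_0+l_m}{2}\leq\tfrac{k}{m-1}+1$ and play it off against $\sum_{i=1}^{m-1}l_i\leq k+m-1$; both close the argument, and your version has the small bonus of the explicit intermediate bound $\area{P}\leq\tfrac{km}{m-1}+m$. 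Two minor points: the inequality $l_i\leq k_i+1$ is not literally the statement of Corollary \ref{IntPointsVerticalSegments} (which bounds $k_i$ from below in terms of the position of the column); it is the elementary fact that an open segment of length $L$ on an integral vertical line contains more than $L-1$ lattice points, which the paper encodes via its $r_{i,a},r_{i,b}$ bookkeeping, so you should state it as such. And your closing aside on Pick's formula is too coarse as written: ``at most two boundary lattice points per interior column'' only yields an inequality, whereas the paper's remark obtains the equality by pairing the fractional overhangs $r_{i,a},r_{i,b}$ across the polygon using the central symmetry of $\Z^2$ about the midpoint of a lattice segment.
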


\begin{proof} We chose \textit{lattice width coordinates} introduced in \ref{LatticeWidthCoordinates}, i.e. we can assume without loss of generality, that $P\subseteq \{(x_1,x_2)\in \R^2 :0\leq x_1\leq \lw{P}\}$. We also define $l_i:=|P \cap \{x_1=i\}|, k_i:=|\interior{P}\cap \{x_1=i\}\cap \Z^2|$ and as length of line segments
\begin{align*}
r_{i,a}:=&|\{(x_1,x_2)\in \R^2 : x_1=i,\\& \floor{\max\{j\in \R : (i,j)\in P\}} \leq x_2\leq \max\{j\in \R : (i,j)\in P\}\}|,\\
r_{i,b}:=&|\{(x_1,x_2)\in \R^2 : x_1=i,\\& \ceil{\min\{j\in \R : (i,j)\in P\}} \geq x_2\geq \min\{j\in \R : (i,j)\in P\}\}|.
\end{align*}
for all $i\in [0,\lw{P}]\cap \Z$.
	
We start with a triangulation of $P$ in special (non-lattice!) triangles with one of the segments $P\cap \{x_1=i\}$ as base, height $1$ and another edge at the boundary of $P$. Using the triangulation in the first step, the definition of the $r_{i,a}, r_{i,b}, k_i$ in the second and $0\leq r_{i,a}, r_{i,b}\leq 1$ in the third, we get
\begin{align*}
\area{P}=&\frac{l_0}{2}+\frac{l_{\lw{P}}}{2}+2\cdot \sum_{i=1}^{\lw{P}-1}\frac{l_i}{2}\\
=&\frac{l_0}{2}+\frac{l_{\lw{P}}}{2}+\sum_{i=1}^{\lw{P}-1}\left(k_i-1+r_{i,a}+r_{i,b}\right)\\
=&\frac{l_0}{2}+\frac{l_{\lw{P}}}{2}+k-(\lw{P}-1)+\sum_{i=1}^{\lw{P}-1}\left(r_{i,a}+r_{i,b}\right)\\
\leq&\frac{l_0}{2}+\frac{l_{\lw{P}}}{2}+k-(\lw{P}-1)+2(\lw{P}-1)\\
=&\frac{l_0}{2}+\frac{l_{\lw{P}}}{2}+k+\lw{P}-1.
\end{align*}
Since $P$ is a lattice polygon, its longest possible slicing length is equal to one of the $l_i$, which we call $l_{i_{\max}}$, and this is longer than the midsegment of the trapezoid
\begin{align*}
\conv{P\cap \{x_1=0\}, P\cap \{x_1=\lw{P}\}}\subseteq P,
\end{align*}
which has the length $\frac{l_0+l_{\lw{P}}}{2}$. If $P$ is not affine unimodular equivalent to an integer multiple of $\Delta_2$, then $k_i\geq 1$ for all $1\leq i\leq \lw{P}-1$ by \ref{numpointslp} and in this case we get
\begin{align*}
area(P)\leq\frac{l_0}{2}+\frac{l_{\lw{P}}}{2}+k+\lw{P}-1\leq k_{i_{\max}}+1+k+1+\sum_{i=1, i\neq i_{\max}}^{\lw{P}-1}k_i
= 2k+2.
\end{align*}
If $P$ is unimodular equivalent to $\lw{P}\cdot \Delta_2$, then we have
\begin{align*}
\area{P}=\frac{\lw{P}^2}{2}, \quad |\interior{P} \cap \Z^2|=\sum_{i=1}^{\lw{P}-2}i=\frac{(\lw{P}-1)(\lw{P}-2)}{2}.
\end{align*}
So we have $\area{3\cdot \Delta_2}=2(k+1)+\frac{1}{2}$ and  $\area{P}=2k-\frac{\lw{P}^2}{2}+3\lw{P}-2\leq 2(k+1)$ for  $\lw{P}\geq 4$.
\end{proof}

\begin{rem}
We can also get the result in \ref{Scott} as a corollary of the area bounds in Proposition \ref{OneLine}, \ref{TwoLines} and \ref{3ormore}.
\end{rem}

\begin{rem}
If we look at the details of our proof of \ref{Scott}, then we see that we have also almost given a proof of \textit{Pick's formula}
\begin{align*}
\area{P}=k+\frac{b}{2}-1
\end{align*}
from \cite[§. I.]{Pic99} for a lattice polygon $P$ with $k$ interior lattice points and $b$ boundary lattice points. This is because $r_{i,a}=1$ or $r_{i,b}=1$ if and only if we have a boundary lattice point of $P$ on $x_1=i$ and the other $r_{i,a}, r_{i,b}$ add up to $1$ in pairs, because $\Z^2$ is centrally symmetric to the center of any line segment between two lattice points. If we use $b_i$ for the number of boundary lattice points of $P$ at $\{x_1=i\}$, $b_a$ for their number at the upper boundary, $b_b$ for their number at the lower boundary below and $b$ for the number of all boundary lattice points of $P$, then we get Pick's formula via
\begin{align*}
area(P)=&\frac{l_0}{2}+\frac{l_{\lw{P}}}{2}+k-(\lw{P}-1)+\sum_{i=1}^{\lw{P}-1}\left(r_{i,a}+r_{i,b}\right)\\
=&k+\frac{b_0-1+b_{l+1}-1}{2}-(\lw{P}-1)+\sum_{i=1}^{\lw{P}-1}\left(r_{i,a}+r_{i,b}\right)\\
=&k+\frac{b_0+b_{l+1}}{2}-\lw{P}+b_a+b_b+\sum_{i=1, r_{i,a}<1}^{\lw{P}-1}r_{i,a}+\sum_{i=1, r_{i,b}<1}^{\lw{P}-1}r_{i,b}\\
=&k+\frac{b_0+b_{l+1}}{2}-\lw{P}+b_a+b_b+\frac{\lw{P}-1-b_a}{2}+\frac{\lw{P}-1-b_b}{2}\\
=&k+\frac{b}{2}-1.
\end{align*}
We can even do this without choosing width coordinates first, so we just need our \textit{integration method}. Note again that we have triangulated in non-lattice (!) triangles, as opposed to the triangulations in the classical proofs of Pick's theorem (see e.g. \cite[13.]{AZ18}).
\end{rem}

\begin{rem}
For lattice polygons, we can also use results from algebraic geometry via the language of toric varieties. Max Noether's formula
\begin{align*}
\chi(\mathcal{O}_X)=\frac{K_X\cdot K_X + e(X)}{12},
\end{align*}
which is the topological part in the surface case of the Hirzebruch-Riemann-Roch theorem, implies two combinatorial 'number 12'-theorems in the toric setting, which were essentially already part of the classical textbooks of toric geometry. These combinatorial 'number 12'-theorems lead to a formula for the area of a lattice polygon, which not only implies Coelman's conjecture but also gives a description of the error term. Parts of this story were already mentioned in \cite{Cas12}, but the explicit area formula was not written down there.
\end{rem}

\begin{lemma}\label{number12}
Let $P\subseteq \R^2$ be a lattice polygon, $F(P):=\conv{(\text{int}(P)\cap \Z^2}$ the convex hull of its interior lattice points.\\
If $\dim(F(P))=0$, then we have for the numbers of boundary lattice points $b(P)$ and $b(P^\ast)$ of $P$ and the dual polygon $P^\ast=\{v\in (\R^2)^\ast \mid v(x)\geq -1 \ \forall x\in P\}$
\begin{align*}
b(P)+b(P^\ast)=12.
\end{align*}
If $\dim(F(P))=2$, then we have for the lattice points on the boundary
\begin{align*}
b(P)-b(F(P))=12-n(\Delta_{P,smooth}),
\end{align*}
where $n(\Delta_{P,smooth})$ is the number of rays in the smooth refinement of the normal fan $\Delta_P$ of $P$ that we get from the rays generated by the boundary lattice points on the bounded edges of 
\begin{align*}
\conv{\sigma \cap (\Z^2\setminus \{0\})}
\end{align*}
for each cone $\sigma$ of $\Delta_P$.
\end{lemma}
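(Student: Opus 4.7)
The plan is to apply Max Noether's formula to a suitable smooth projective toric surface constructed from $P$. Let $X := X(\Delta_{P,\text{smooth}})$ denote the smooth projective toric surface associated with the smooth refinement of the normal fan of $P$. Since $X$ is rational, $\chi(\mathcal{O}_X) = 1$, so Noether's formula yields $K_X^2 + e(X) = 12$. For any smooth complete toric surface, the topological Euler characteristic equals the number of rays of the defining fan, so $e(X) = n(\Delta_{P,\text{smooth}})$ and $K_X^2 = 12 - n(\Delta_{P,\text{smooth}})$. Both parts of the lemma then reduce to identifying $K_X^2$ with the claimed lattice-point count.

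For Case 1, translate so that $0$ is the unique interior lattice point of $P$; then $P$ is reflexive. The polytope associated with the anti-canonical divisor $-K_X = \sum_\rho D_\rho$ on $X$ is still $P$ itself, because each extra ray added in the smooth refinement corresponds to a non-vertex boundary lattice point $v$ of $P^*$, and the associated inequality $\langle v, x \rangle \geq -1$ is a convex combination of the defining inequalities of $P$ coming from the endpoints of the edge of $P^*$ through $v$. The toric formula $(-K_X)^2 = 2\area{P}$ combined with Pick's theorem (applied with one interior lattice point) gives $K_X^2 = b(P)$, while the rays of $\Delta_{P,\text{smooth}}$ are in bijection with the boundary lattice points of $P^*$ (all primitive because $P^*$ is reflexive). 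Hence $e(X) = b(P^*)$, and substituting into $K_X^2 + e(X) = 12$ gives $b(P) + b(P^*) = 12$.

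For Case 2, with $\dim F(P) = 2$, Lemma \ref{numpointslp} guarantees that each integral line meeting $\interior{P}$ contains an interior lattice point; I would use this to establish the divisor identity $D_P = D_{F(P)} - K_X$ on $X$, which amounts to checking $F(P) = \{x : \langle \rho, x \rangle \geq -a_\rho + 1 \text{ for every ray } \rho \text{ of } \Delta_P\}$, where $a_\rho$ is the support function value of $P$ at $\rho$. Expanding $D_P^2 = (D_{F(P)} - K_X)^2$ and applying the toric dictionary $D^2 = 2\area{\cdot}$ together with $-D \cdot K_X = b(\cdot)$ for the polytope of $D$, combined with Pick's theorem for both $P$ and $F(P)$, yields after a short computation the identity $b(P) - b(F(P)) = K_X^2 = 12 - n(\Delta_{P,\text{smooth}})$.

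The main obstacle is the verification in Case 2 that $F(P)$ really equals the ``one-step inner polygon'' $P' = \{x : \langle \rho, x \rangle \geq -a_\rho + 1\}$: while $P' \cap \Z^2 = \interior{P} \cap \Z^2$ holds immediately, ruling out non-lattice vertices of $P'$ is the geometric content of the hypothesis $\dim F(P) = 2$ and requires a careful case analysis of how the inward lattice shift interacts with the vertices of $P$. This combinatorial subtlety, rather than the toric-geometric translation via Noether's formula, is the delicate part of the argument.
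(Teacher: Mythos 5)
Your proposal follows the same route that the paper intends: both identities are read off from Noether's formula $K_X^2+e(X)=12\chi(\mathcal{O}_X)=12$ for the smooth complete toric surface $X=X(\Delta_{P,smooth})$, using $e(X)=n(\Delta_{P,smooth})$. The difference is that the paper's proof is a pure citation (to \cite{PR00} for the first formula and to \cite[4.5.2]{Koe91} for the second), whereas you actually carry out the derivation. Your Case 1 is complete and correct: the resolution of the Gorenstein toric del Pezzo surface of a reflexive $P$ is crepant, so $(-K_X)^2=2\area{P}=b(P)$ by Pick's formula with $k=1$, and the rays of $\Delta_{P,smooth}$ are exactly the boundary lattice points of $P^\ast$. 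In Case 2 the bookkeeping is cleaner if you pair with $K_X$ once rather than squaring: $b(F(P))=-D_{F(P)}\cdot K_X=-(D_P+K_X)\cdot K_X=b(P)-K_X^2$ gives the claim directly. But either way the argument rests on the step you flag yourself: that $D_P+K_X$ is nef on $X$ with polytope $F(P)$, i.e.\ that every inequality $\langle u_\rho,x\rangle\geq -a_\rho+1$ is tight on $F(P)$ --- and note this must be checked for the added rays of $\Delta_{P,smooth}$ as well, not only for the facet normals of $P$, since otherwise the formulas $D^2=2\area{\cdot}$ and $-D\cdot K_X=b(\cdot)$ do not apply. Lemma \ref{numpointslp} does not supply this: it concerns vertical lines in lattice-width coordinates, not lines parallel to an arbitrary edge of $P$ at lattice distance one. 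This adjoint-polygon statement is exactly the content of the result of Koelman (cf.\ also \cite{HS09}) that the paper's proof delegates to. So: same approach as the paper, Case 1 fully done, Case 2 a correct reduction with one genuine, honestly acknowledged gap that the paper closes by citation rather than by argument.
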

\begin{proof}
Both formulas are consequences of Noether's formula in the case of smooth complete toric surfaces. For the first formula see \cite{PR00}, for the second formula see \cite[4.5.2]{Koe91}. For the background of the formulas in the classical textbooks see \cite[p. 57 f.]{Oda78}, \cite[p. 45 f.]{Oda88}, \cite[p.44]{Ful93} and \cite[Theorem 10.5.10.]{CLS11}.
\end{proof}

\begin{proof}[Proof of Theorem \ref{LatticePolygonArea}]
If $\dim(F(P))=0$, then we have $\area{F(P)}=0$ and also $n(\Delta_{P,smooth})=b(P^\ast)$. So we get with Pick's formula, $k=1$ and the result from the first formula in \ref{number12}
\begin{align*}
\area{P}=&k+\frac{b}{2}-1=1+\frac{12-n(\Delta_{P,smooth})}{2}-1\\
=&\underbrace{2(k+1)+2}_{=6}-\frac{n(\Delta_{P,smooth})}{2}-\underbrace{\area{F(P)}}_{=0}.
\end{align*}

If $\dim(F(P))=1$, then we also have $\area{F(P)}=0$. The formula is correct for a rectangle $\conv{(0,-1),(0,1),(n,-1),(n,1)}$ for any $n\in \Z_{\geq 3}$. By \cite[4.3]{Koe91} we can obtain a lattice polygon affine unimodular equivalent to $P$ by intersecting such a rectangle with up to four half-planes, whose normal vectors have $\pm 1$ or $\pm 2$ as first coordinate. We see that the equation remains correct for each intersection, because the loss of area is compensated by the loss of interior integral points and the change in $n(\Delta_{P,smooth})$.

If $\dim(F(P))=2$ we have from Pick's theorem for the number of boundary and interior lattice points $b(P), k(P)$ and $b(F(P)), k(F(P))$ of $P$ and $F(P)$ due to $k(P)=k(F(P))+b(F(P))$
\begin{align*}
\area{P}+\area{F(P)}=&k(P)+\frac{b(P)}{2}-1+k(F(P))+\frac{b(F(P))}{2}-1\\
=&2k(P)+\frac{b(P)-b(F(P))}{2}-2.
\end{align*}
Using the second formula from \ref{number12}, we get the result.
\end{proof}

Let $n$ be the number of vertices of the lattice polygon. We can use Theorem \ref{LatticePolygonArea} to give a proof of Coleman's conjecture $\area{P}\leq 2(k+1)+2-\frac{n}{2}$ from \cite{Col78}, because $n(\Delta_{P,smooth})$ is the number of rays in a refinement of the normal fan that has $n$ vertices, and so we get $n_{\Delta_{P,smooth}}\geq n$. We also get the additional correction term $\area{F(P)}$. To have the correction term depending on the lattice width, we can use the classical result $\area{P}\geq \frac{3}{8}\lw{P}^2$ from \cite{FM75} for the polygon $F(P)$ together with $\lw{F(P)}=\lw{P}-2$ from \cite[Theorem 13]{LS11} for polygons not affine unimodular equivalent to a multiple of $\Delta_2$. So we have the following corollary.

\begin{coro}
Let $P\subseteq \R^2$ be a lattice polygon with $\lw{P}\geq 3$ and not affine unimodular equivalent to an integer multiple of  $\Delta_2$.\\
Then we have
\begin{align*}
\area{P}\leq &2k+4-\frac{n}{2}-\frac{3}{8}(\lw{P}-2)^2.
\end{align*}
\end{coro}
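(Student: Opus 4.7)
The plan is to substitute two lower bounds into the exact area identity of Theorem \ref{LatticePolygonArea}, namely $n(\Delta_{P,smooth})\geq n$ and $\area{F(P)}\geq \frac{3}{8}(\lw{P}-2)^2$, after which the claim follows in one line.

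First I would check that the hypotheses place us in the case $\dim(F(P))=2$ of Theorem \ref{LatticePolygonArea}. Since $P$ is not affine unimodular equivalent to a multiple of $\Delta_2$ and $\lw{P}\geq 3$, the result \cite[Theorem 13]{LS11} gives $\lw{F(P)}=\lw{P}-2\geq 1$, which forces $F(P)$ to be two-dimensional. Thus we may use the identity
\begin{align*}
\area{P}=2(k+1)+2-\frac{n(\Delta_{P,smooth})}{2}-\area{F(P)}.
\end{align*}

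Next I would establish the two lower bounds. For the refinement count, the normal fan $\Delta_P$ of an $n$-gon has exactly $n$ rays, one per edge; the smooth refinement is obtained by inserting additional rays inside the cones until every two-dimensional cone is generated by a lattice basis, so no rays are removed and $n(\Delta_{P,smooth})\geq n$. For the area, I would invoke the classical inequality $\area{C}\geq \frac{3}{8}\lw{C}^2$ for any planar convex body $C$ from \cite{FM75}, applied with $C=F(P)$ and $\lw{F(P)}=\lw{P}-2$. Substituting both bounds into the identity above and simplifying $2(k+1)+2=2k+4$ yields the stated inequality.

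The main delicacy, rather than a genuine obstacle, is ensuring that the auxiliary inputs \cite[Theorem 13]{LS11} and \cite{FM75} apply; both require $F(P)$ to be a full-dimensional convex body with positive lattice width, which is precisely why the hypotheses $\lw{P}\geq 3$ and $P\not\cong m\cdot \Delta_2$ are imposed in the corollary: the former forces $\lw{F(P)}\geq 1$, while the latter avoids the known exception to the formula $\lw{F(P)}=\lw{P}-2$.
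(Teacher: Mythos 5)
Your proposal is correct and follows essentially the same route as the paper: substitute the lower bounds $n(\Delta_{P,smooth})\geq n$ and $\area{F(P)}\geq \frac{3}{8}\lw{F(P)}^2$ (via \cite{FM75} and $\lw{F(P)}=\lw{P}-2$ from \cite[Theorem 13]{LS11}) into the identity of Theorem \ref{LatticePolygonArea}. Your additional check that the hypotheses force $\dim(F(P))=2$ is a reasonable bit of extra care, though not strictly needed since the identity in Theorem \ref{LatticePolygonArea} holds in all cases.
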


\begin{rem}\label{AreaMinimumSharp}
Because for integer multiples of $\Delta_2$ we have 
\begin{align*}
\area{P}=2k-\frac{\lw{P}^2}{2}+3\lw{P}-2,
\end{align*}
the corollary holds for all lattice polygons with $\lw{P}\geq 10$.

For large $\lw{P}$ the inequality is also sharp up to a constant less than or equal to $3$, since the inequality $\area{P}\geq \frac{3}{8}\lw{P}^2$ is sharp for integer multiples of $Q:=\conv{(1,0),(0,1),(-1,1)}$, we have $F(mQ)=(m-1)Q$ for $m\in\Z_{\geq 1}$ and $n(\Delta_{Q,smooth})=9$.
\end{rem}

\begin{proof}[Proof of Theorem \ref{TheoremBigWidth}]
If $\lw{K}>5$ we have at least five integral vertical lines and thus the area bound follows from \ref{3ormore} and \ref{SymmetricLWD}.

For the asymptotic behavior of the correction term $f$ in $\lw{K}$ to the classical bound $2(k+1)$, we must have
\begin{align*}
\limsup_{x\to \infty} \frac{f(x)}{x^2}\leq \frac{3}{8},
\end{align*}
because of the example $mQ$ in \ref{AreaMinimumSharp}.
\end{proof}

\vspace{15mm}

\textbf{Acknowledgements.} I would like to thank Helena Stegmaier and Andreas Bäuerle for their helpful comments. I would also like to thank Victor Batyrev, who suggested investigating generalizations of Scott's inequality, and Daniel Hättig whose work \cite{Hae22} motivated me to investigate area bounds of rational polygons.

\newpage

\end{document}